\address{HH:Department of Mathematics, Department of Mathematics, Kansas State University. Manhattan, KS 66502}
\email{hakobyan@math.ksu.edu}
\address{DS:Department of Mathematics, Queens College of CUNY,
65-30 Kissena Blvd., Flushing, NY 11367}
\email{Dragomir.Saric@qc.cuny.edu}
\address{DS:Mathematics PhD. Program, The CUNY Graduate Center, 365 Fifth Avenue, New York, NY 10016-4309}
\theoremstyle{definition}
 \newtheorem{definition}{Definition}[section]
 \newtheorem{remark}[definition]{Remark}
\theoremstyle{plain}
 \newtheorem{theorem}[definition]{Theorem}
 \newtheorem{corollary}[definition]{Corollary}
 \newtheorem{lemma}[definition]{Lemma}
\newcommand{\eps}{\varepsilon}
\newcommand{\G}{\Gamma}
\newcommand{\g}{\gamma}
\newcommand{\D}{\Delta}
\renewcommand{\d}{\delta}
\newcommand{\dist}{\mathrm{dist}}
\newcommand{\m}{\mathrm{mod}}
\renewcommand{\O}{\Omega}
\newcommand{\diam}{\mathrm{diam}}
\title[Visual sphere of the Universal Teichm\"uller space]{Visual sphere and Thurston's boundary of the Universal Teichm\"uller space}
\author{Hrant Hakobyan and Dragomir \v Sari\' c}
\thanks{The second author was partially supported by National Science Foundation grant DMS 1102440 and by the Simons Foundation grant.}
\begin{document}

\subjclass{}

\keywords{}
\date{\today}

\begin{abstract}
Thurston's boundary to the universal Teichm\"uller space $T(\mathbb{D})$ is the space  $PML_{bdd}(\mathbb{D})$  of projective
bounded measured laminations of $\mathbb{D}$. 
A geodesic ray in $T(\mathbb{D})$ is of Teichm\"uller type if it shrinks vertical foliation of an integrable holomorphic quadratic differential. In a prior work we established that each Teichm\"uller geodesic ray limits to a multiple (by the reciprocal of the length of the leaves) of vertical foliation of the quadratic differential. 

Certain non-integrable holomorphic quadratic differential induce geodesic rays and we consider their limit points in $PML_{bdd}(\mathbb{D})$. Somewhat surprisingly, the support of the limiting projective measured laminations might be a geodesic lamination whose leaves are not homotopic to leaves of either vertical or horizontal foliation of the non-integrable holomorphic quadratic differential.
\end{abstract}

\maketitle

\section{Introduction}

Let $\mathbb{D}$ be the unit disk model of the hyperbolic plane.
The Teichm\"uller space $T(\mathbb{D})$ of the hyperbolic plane $\mathbb{D}$, called
the {\it universal} Teichm\" uller space, consists of all quasisymmetric maps
$h:S^1\to S^1$ which fix $1$, $i$ and $-1$ (cf. \cite{GL}). The
Teichm\" uller space of an arbitrary hyperbolic surface embeds in
$T(\mathbb{D})$ as a complex Banach submanifold. Thurston's boundary to the universal Teichm\"uller space $T(\mathbb{D})$ is the space $PML_{bdd}(\mathbb{D})$ of projective bounded measured laminations  of $\mathbb{D}$ (cf. \cite{Sa2}, \cite{Sa3}). Teichm\" uller geodesic rays are obtained by shrinking vertical trajectories of integrable holomorphic quadratic differentials. A Teichm\" uller geodesic ray 
corresponding to an integrable holomorphic quadratic differential $\varphi$ limits to a unique point in Thurston's boundary whose support geodesic lamination is homotopic to vertical foliation of $\varphi$ and the transverse measure is given by integrating the reciprocal of the lengths of vertical leaves against $Re(\sqrt{\varphi dz^2})$ (cf. \cite{HaSar1}).  
Certain non-integrable holomorphic quadratic differentials induce geodesic rays in $T(\mathbb{D})$ by shrinking their vertical trajectories in the same fashion as for integrable differentials. We study the limits of these geodesic rays on Thurston's boundary to $T(\mathbb{D})$.

\vskip .2 cm

The space $G(\mathbb{D})$ of oriented geodesics of $\mathbb{D}$ is identified with $S^1\times S^1-diag$ since each geodesic is uniquely determined by the ordered pair of its ideal endpoints on $S^1$. A {\it geodesic current} is a positive Borel measure on $G(\mathbb{D})$. The universal Teichm\"uller space $T(\mathbb{D})$ embeds into the space of geodesic currents when equipped with the uniform weak* topology (cf. \cite{Sa3}). Thurston's boundary to $T(\mathbb{D})$ is the set of asymptotic rays to the image of $T(\mathbb{D})$ in the space of geodesic currents and it is identified with the space $PML_{bdd}(\mathbb{D})$ of projective bounded measured laminations of $\mathbb{D}$ (cf. \cite{Sa3}). This approach was first introduced by
Bonahon
\cite{Bon1} to give an alternative description of Thurston's boundary of the Teichm\"uller space $T(S)$ of a closed surface $S$ of genus at least two.

In the case of closed surfaces, Masur \cite{Mas} proved that 
Teichm\"uller geodesic rays obtained by shrinking vertical trajectories of
holomorphic quadratic differentials with uniquely ergodic vertical foliations
converge to the projective classes of their vertical foliations in 
Thurston's boundary. However, when vertical foliations of holomorphic quadratic differentials on closed surfaces are not uniquely ergodic then the limit sets of the corresponding Teichm\"uller geodesic rays consist of more than one point while their supports are homotopic to vertical foliation of the quadratic differential (cf. \cite{Len}, \cite{LLR}).
On the other hand, the limits of Teichm\"uller geodesic rays in the universal Teichm\"uller space $T(\mathbb{D})$ corresponding to integrable holomorphic quadratic differentials always have a unique endpoint in Thurston's boundary of $T(\mathbb{D})$ (cf. \cite{HaSar1}).

Let $\varphi$ be an integrable holomorphic quadratic differential on $\mathbb{D}$. 
 Each vertical trajectory
 of $\varphi$ has two distinct endpoints on the boundary circle $S^1$ of the hyperbolic plane $\mathbb{D}$ (cf. \cite{Str}).
 Thus each vertical trajectory of $\varphi$ is homotopic to a unique geodesic of $\mathbb{D}$ relative ideal endpoints on $S^1$. Let $v_{\varphi}$ be the set of the geodesics in $\mathbb{D}$ homotopic to the vertical trajectories of $\varphi$.
Given a box of geodesics $[a,b]\times [c,d]\subset S^1\times S^1-diag$,
denote by $I_{[a,b]\times [c,d]}$ (at most countable) union of sub-arcs of horizontal trajectories that
intersects exactly once each vertical trajectory of $\varphi$ with one endpoint in $[a,b]$
and the other endpoint in $[c,d]$, and that does not intersect any other vertical trajectories of $\varphi$.

Define measured laminations $\nu_{\varphi}$ and $\mu_{\varphi}$ of $\mathbb{D}$ supported on $v_{\varphi}$ by
$$
 \nu_{\varphi}([a,b]\times [c,d])=\int_{I_{[a,b]\times [c,d]}}dx
$$
and
$$
 \mu_{\varphi}([a,b]\times [c,d])=\int_{I_{[a,b]\times [c,d]}}\frac{1}{l(x)}dx
$$
where $x=\int_{*}\sqrt{\varphi}dz$ is the natural parameter of $\varphi$ and $l(x)$ is the $\varphi$-length of the vertical trajectory through $x$ (cf. \cite{HaSar1}). Then (cf. \cite{HaSar1})
$$
\epsilon T_{\epsilon}\to \mu_{\varphi}
$$
as $\epsilon\to 0^{+}$ in the weak* topology on geodesic currents, where $T_{\epsilon}$ is a quasiconformal map of $\mathbb{D}$ that shrinks the vertical trajectories of $\varphi$ by a multiplicative constant $\epsilon$. In other words, the Teichm\"uller geodesic ray $T_{\epsilon}$ converges to $[\mu_{\varphi}]\in PML_{bdd}(\mathbb{D})$.

The space of all geodesic rays in the Teichm\"uller metric starting at the basepoint $[id]\in T(\mathbb{D})$ leaving every bounded subset of $T(\mathbb{D})$ is called the {\it visual boundary} of the universal Teichm\"uller space $T(\mathbb{D})$. The {\it Teichm\"uller geodesic rays}-obtained by shrinking the vertical direction of an integrable holomorphic quadratic differential $\varphi$-form an open and dense subset of $T(\mathbb{D})$ (cf. \cite{GL}).
However, there exist geodesic rays different from Teichm\"uller geodesic rays. A Beltrami coefficient $\xi$ of a quasiconformal map $f:\mathbb{D}\to\mathbb{D}$ is said to be {\it extremal} if $\|\xi\|_{\infty}$ is minimal among all Beltrami coefficients of quasiconformal maps representing the same point in $T(\mathbb{D})$ (where $f,g:\mathbb{D}\to\mathbb{D}$ represent the same point of $T(\mathbb{D})$ if $f|_{S^1}=g|_{S^1}$ \cite{GL}). If an extremal Beltrami coefficient $\xi$ is not of the Teichm\"uller type $k\frac{|\varphi |}{\varphi}$ for $0<k<1$ and $\varphi$ integrable, then $t\mapsto t\xi$ for $t\in [0,\frac{1}{\|\xi\|_{\infty}})$ is a geodesic ray that is not a Teichm\"uller geodesic ray.

We consider the limits of two (non-Teichm\"uller) geodesic rays introduced by Strebel \cite{GL}. The first example is given by a horizontal strip $S=\{ 0<Im(z)<1\}$ with the Beltrami coefficient $\xi =k\frac{|\varphi |}{\varphi}$ with $\varphi(z)\equiv 1$. Since $S$ does not have finite Euclidean area, the holomorphic quadratic differential $\varphi$ is not integrable and the corresponding geodesic ray is not Teichm\"uller. Note that $S$ is conformally identified with $\mathbb{D}$ and this identification is implicitly assumed. We denote by $T_{\epsilon}$ the shrinking of vertical trajectories by the factor $\epsilon$ and denote by $T_{1/\epsilon}$ the stretching of the vertical trajectories of $\varphi$ by the factor $1/\epsilon$ as $\epsilon\to 0^{+}$. We prove (cf. Theorem \ref{thm:strip} and Figure 1)

\vskip .2 cm

\noindent {\bf Theorem 1.} {\it 
Let $S=\{ 0< Im(z)<1\}$ be a horizontal strip and let $\varphi (z)=1$ for all $z\in S$.
Denote by $T_{\epsilon}$, $\epsilon >0$, the geodesic ray in $T(\mathbb{D})$ obtained by shrinking the vertical leaves of $\varphi$ by a factor $\epsilon$ and denote by $T_{\frac{1}{\epsilon}}$, $\epsilon >0$, the geodesic ray in $T(\mathbb{D})$ obtained by stretching the vertical leaves by a factor $\frac{1}{\epsilon}$.

Let $\nu_1$ be the (hyperbolic) measured lamination on $S$ whose support is
homotopic to the vertical foliation of $\varphi (z)=1$ on $S$ and whose transverse
measure is given by the euclidean length of the transverse horizontal set.
Let $\nu_2$ be the dirac measured lamination on $S$ with support the
hyperbolic geodesic homotopic to horizontal trajectories in $S$. 

Then we have
$$
T_{\epsilon}\to [\nu_1]
$$
and
$$
T_{1/\epsilon}\to [\nu_2]
$$
as $\epsilon\to 0^{+}$ in Thuston's boundary $ PML_{bdd}(\mathbb{D})$ of the universal Teichm\"uller space $T(\mathbb{D})$. The rate of convergence of $T_{\epsilon}$ is $1/\epsilon$ and the rate of convergence of $T_{1/\epsilon}$ is $1/\epsilon^{*}$, where $\epsilon^{*}\to 0^{+}$ as $\epsilon\to 0^{+}$.}

\vskip .2 cm

\noindent {\bf Remark 1.} Note that all vertical trajectories in $S$ have finite $\varphi$-lengths which is the same as in the case of integrable holomorphic quadratic differentials. On the other hand, horizontal trajectories of $\varphi$ have infinite lengths. Unlike for intergrable case, this makes the $\varphi$-metric unsuitable for making allowable metrics when computing moduli of various quadrilaterals and we find a new method for dealing with the difficulty.

\vskip .2 cm

Next we
consider Strebel's chimney domain $C=\{ z:Im(z)<0\}\cup \{ z:|Re(z)|<1\}$. The holomorphic quadratic differential $\varphi (z)dz^2=dz^2$ is not integrable on $C$ while the corresponding Beltrami coefficient $k\frac{|\varphi |}{\varphi}=k$ is extremal. Denote by $T_{\epsilon}$ as $\epsilon\to 0^{+}$ the geodesic ray obtained by shrinking the vertical foliation of $\varphi$ by the factor $\epsilon$. We prove (cf. Theorem \ref{thm:chimney} and Figure 2)

\vskip .2 cm

\noindent {\bf Theorem 2.} {\it Let $\nu$ be a measured lamination on $C$ which is a sum of two Dirac
measured laminations supported on geodesics $\g_1$ and $\g_2$ in $C$ with endpoints $1,+\infty\in \partial C$ and endpoints $-1,+\infty\in \partial C$, respectively. Then
$$
T_{\epsilon}\to [\nu ]
$$
as $\epsilon\to 0^{+}$ in Thuston's closure $T(\mathbb{D})\cup PML_{bdd}(\mathbb{D})$ of the universal Teichm\"uller space $T(\mathbb{D})$. The rate of convergence of $T_{\epsilon}$ is $1/\epsilon^{*}$, where $\epsilon^{*}\to 0^{+}$ as $\epsilon\to 0^{+}$.}

\vskip .2 cm

\noindent {\bf Remark 2.} All vertical leaves on $C$ have infinite lengths. If vertical leaves are straightened into hyperbolic geodesics, then the geodesic lamination $v_{\varphi}$ does not contain $g_1$ and $g_2$ even in its closure. Therefore it is impossible to detect $g_1$ and $g_2$ just by $v_{\varphi}$ alone. In fact, the limits $g_1$ and $g_2$ appear due to the fact that vertical trajectories accumulate to parts of the boundary of $C$.

\section{Thurston's boundary via geodesic currents}

We identify the hyperbolic plane with its upper half-plane model $\mathbb{D}$; the visual
boundary $S^1=\mathbb{R}\cup\{\infty\}$ to $\mathbb{D}$ is homeomorphic to the unit circle. An orientation preserving homeomorphism $h:S^1\to
S^1$ is said to be {\it quasisymmetric} if there exists $M\geq 1$ such that
$$
\frac{1}{M}\leq\frac{|h(e^{x+t})-h(e^x)|}{|h(e^x)-h(e^{x-t})|}\leq M
$$
for all $x\in\mathbb{R}$ and $t>0$. A
homeomorphism is quasisymmetric if and only if it extends to a quasiconformal
map of the unit disk.

\begin{definition}
The universal Teichm\"uller space $T(\mathbb{D})$ consists of all
quasisymmetric maps $h:S^1\to S^1$ that fix $1,i,-1\in S^1$.
\end{definition}

If $g:\mathbb{D}\to\mathbb{D}$ is a quasiconformal map, denote by $K(g)$ its
quasiconformal constant. The Teichm\"uller metric on $T(\mathbb{D})$ is given
by $d(h_1,h_2)=\inf_g K(g)$, where $g$ runs over all quasiconformal
extensions of the quasisymmetric map $h_1\circ h_2^{-1}$. The Teichm\"uller
topology is induced by the Teichm\"uller metric.

The space $G(\mathbb{D})$ of oriented geodesics on $\mathbb{D}$ is identified with $S^1\times S^1-diag$. A {\it geodesic current} is a Borel measure on $G(\mathbb{D})$.
 The {\it Liouville measure} $\mathcal{L}$ on the space of geodesic of $\mathbb{D}$ is given by
 $$
 \mathcal{L}(A)=\int_A \frac{|dx| |dy|}{|x-y|^2}
 $$
 for any Borel set $A\subset S^1\times S^1-diag$. If $A=[a,b]\times [c,d]$ is a {\it box of geodesic} then
 $$
 \mathcal{L}([a,b]\times [c,d])=\log\frac{(a-c)(b-d)}{(a-d)(b-c)}.
 $$

The universal Teichm\"uller space $T(\mathbb{D})$ maps into the space of geodesic currents
by taking the pull backs by quasisymmetric maps of the Liouville measure.
A geodesic current $\alpha$ is {\it bounded}
if
$$
\sup_{[a,b]\times [c,d]}\alpha ([a,b]\times [c,d])<\infty
$$
where the supremum is over all boxes of geodesics $[a,b]\times [c,d]$ with
$\mathcal{L}([a,b]\times [c,d])=\log 2$. The pull backs $h^{*}(\mathcal{L})$
for $h$ quasisymmetric are bounded geodesic currents (cf. \cite{Sa2}).

The pull backs of the Liouville measure define a homeomorphism
of $T(\mathbb{D})$ onto its image in the bounded geodesic currents, when the space of geodesic currents is equipped with the uniform weak* topology (\cite{Sa3}). The asymptotic rays to the image of $T(\mathbb{D})$  are
identified with the space of projective bounded measured laminations (cf.
\cite{Sa3}, \cite{Sa2}). Thus Thurston's boundary of $T(\mathbb{D})$ is the space
$PML_{bdd}(\mathbb{D})$ of all projective bounded measured laminations on
$\mathbb{D}$ (and an analogous statement holds for any hyperbolic Riemann
surface). Bonahon \cite{Bon1} introduced this approach for closed surfaces in order to give an alternative definition of Thurston's boundary.

\section{The asymptotics of the modulus}\label{section:background}

Let $(a,b,c,d)$ be a quadruple of distinct points on $S^1$ given in the
counterclockwise order. Denote by $\Gamma_{[a,b]\times [c,d]}$ the family of
all differentiable curves whose interiors are in $\mathbb{D}$ that have one
endpoint on the arc $[a,b]\subset S^1$ and the other endpoint on the arc
$[c,d]\in S^1$. An {\it admissible metric} $\rho$ for the family
$\Gamma_{[a,b]\times [c,d]}$ is a non-negative measurable function on
$\mathbb{D}$ such that the $\rho$-length of each
$\gamma\in\Gamma_{[a,b]\times [c,d]}$ is at least one, namely
$$
l_{\rho}(\gamma )=\int_{\gamma}\rho (z)|dz|\geq 1.
$$

The {\it modulus} $\m(\Gamma_{[a,b]\times [c,d]})$ of the family
$\Gamma_{[a,b]\times [c,d]}$ is given by
$$
\m(\Gamma_{[a,b]\times [c,d]})=\inf_{\rho}\int_{\mathbb{D}}\rho(z)^2dxdy
$$
where the infimum is over all admissible metrics $\rho$.

Lemma \ref{lemma:modproperties} below, summarizes some of the main properties
of the modulus, which we will use repeatedly throughout the paper. We refer
the reader to \cite{GM,LV,Vaisala:lectures} for the proofs of these
properties below and for further background on modulus.

If $\G_1$ and $\G_2$ are curve families in $\mathbb{C}$, we will say that
$\G_1$ \emph{overflows} $\G_2$ and will write $\G_1>\G_2$ if every curve
$\g_1\in \G_1$ contains some curve $\g_2\in \G_2$.

\begin{lemma} \label{lemma:modproperties}
Let $\G_1,\G_2,\ldots$ be curve families in $\mathbb{C}$. Then
\begin{itemize}
  \item[1.] \textsc{Monotonicity:} If $\G_1\subset\G_2$ then $\m(\G_1)\leq
      \m(\G_2)$.
  \item[2.] \textsc{Subadditivity:} $\m(\bigcup_{i=1}^{\infty} \G_i) \leq
      \sum_{i=1}^{\infty}\m(\G_i).$
  \item[3.] \textsc{Overflowing:} If $\G_1<\G_2$ then $\m \G_1 \geq \m
      \G_2$.
\end{itemize}
\end{lemma}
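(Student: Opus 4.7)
The three properties all follow from unpacking the definition of modulus as an infimum over admissible metrics, and the plan in each case is to show an appropriate inclusion between the sets of admissible metrics (or to build a new admissible metric from given ones).

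For monotonicity, the plan is to observe that an admissible metric for $\Gamma_2$ satisfies $\int_\gamma \rho\,|dz|\geq 1$ for every $\gamma\in\Gamma_2$, hence in particular for every $\gamma\in\Gamma_1\subseteq\Gamma_2$, so it is admissible for $\Gamma_1$. Thus the set of admissible metrics for $\Gamma_1$ contains that for $\Gamma_2$, and the infimum of $\int\rho^2\,dxdy$ over a larger family is no larger, which gives $\m(\Gamma_1)\leq\m(\Gamma_2)$.

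For subadditivity, the plan is the standard ``$\ell^2$ combination'' trick. Given $\eps>0$, choose for each $i$ an admissible metric $\rho_i$ for $\Gamma_i$ with $\int_{\mathbb{D}}\rho_i^2\,dxdy\leq\m(\Gamma_i)+\eps/2^i$, and set $\rho=\bigl(\sum_i\rho_i^2\bigr)^{1/2}$. For any $\gamma\in\bigcup_i\Gamma_i$ there is a $j$ with $\gamma\in\Gamma_j$, and since $\rho\geq\rho_j$ pointwise, one has $\int_\gamma\rho\,|dz|\geq\int_\gamma\rho_j\,|dz|\geq 1$, so $\rho$ is admissible for the union. Using $\int\rho^2\,dxdy=\sum_i\int\rho_i^2\,dxdy\leq\sum_i\m(\Gamma_i)+\eps$ and letting $\eps\to 0$ yields the inequality.

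For the overflowing property, the plan is again to compare the admissible classes. Assume $\Gamma_1<\Gamma_2$, meaning that every $\gamma_2\in\Gamma_2$ contains some $\gamma_1\in\Gamma_1$ as a sub-curve. If $\rho$ is admissible for $\Gamma_1$, then for any $\gamma_2\in\Gamma_2$ and the corresponding $\gamma_1\subset\gamma_2$,
\[
\int_{\gamma_2}\rho\,|dz|\;\geq\;\int_{\gamma_1}\rho\,|dz|\;\geq\;1,
\]
so $\rho$ is admissible for $\Gamma_2$ as well. Hence the set of admissible metrics for $\Gamma_1$ is contained in that for $\Gamma_2$, and the infimum over the smaller class is at least the infimum over the larger one, giving $\m(\Gamma_1)\geq\m(\Gamma_2)$.

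None of the three items is really a ``hard part'': the whole content is the observation that the admissibility condition is preserved under the hypotheses (inclusion of curve families, passage to sub-curves, and the pointwise inequality $\rho\geq\rho_j$), together with the elementary additivity of $\rho^2$. The only mild technical care needed is the $\eps/2^i$ bookkeeping in subadditivity to handle a countable union; everything else is a direct application of the definition.
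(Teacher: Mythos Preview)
Your argument is correct and is precisely the standard proof found in the references the paper cites (\cite{GM,LV,Vaisala:lectures}); the paper itself does not supply a proof of this lemma but simply refers the reader to those sources. One cosmetic point: in your subadditivity step you write $\int_{\mathbb{D}}\rho_i^2\,dxdy$, but the lemma is stated for curve families in $\mathbb{C}$, so the integral should be over the ambient domain rather than $\mathbb{D}$; this does not affect the argument.
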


We will mostly be interested in estimating moduli of families of curves in a
domain $\Omega\subset\mathbb{C}$ connecting two subsets of the boundary of
$\O$. Thus, given $E,F\subset\partial\O$ we denote
\begin{align}
(E,F;\O)=\{\g:[0,1]\to\O : \g(0)\in E \mbox{ and } \g(1)\in F \}
\end{align}
the family of curves $\g$ starting in $E$ and terminating in $F$. With this
notation we have
$$\G_{[a,b]\times[c,d]} = ((a,b),(c,d);\mathbb{D}).$$

If the domain $\O$ is clear from the context, we will suppress it from the
notation and just write $\G_{E,F}$ instead of $(E,F;\O)$.

Heuristically modulus of $(E,F;\O )$ measures the amount of curves
connecting $E$ and $F$ in the $\O$. The more ``short" curves they are the
bigger the modulus is. This heuristic may be made precise using a notion of
relative distance $\D(E,F)$, which we define next.

Given two continua $E$ and $F$ in $\mathbb{C}$ we denote
\begin{align}
  \D(E,F) := \frac{\mathrm{dist}(E,F)}{\min\{\diam E, \diam F\}},
\end{align}
i.e. $\D(E,F)$ is the \textit{relative distance} between $E$ and $F$ in
$\mathbb{C}$.

\begin{lemma}[cf. \cite{HaSar1}]\label{lemma:mod-reldist}
For every pair of continua $E,F\subset\mathbb{C}$ we have
\begin{align}\label{modest:reldistance}
\m(E,F;\mathbb{C}) \leq \pi\left(1+\frac{1}{2\D(E,F)}\right)^2.
\end{align}
\end{lemma}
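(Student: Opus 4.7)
The strategy is to construct an admissible metric $\rho$ for the family $(E,F;\mathbb{C})$ and estimate $\int_{\mathbb{C}}\rho^{2}$. Swapping $E$ and $F$ if necessary we may assume $r:=\diam E\leq \diam F$, and write $d:=\dist(E,F)$, so $\D(E,F)=d/r$. Let $R:=\{z\in\mathbb{C}:\dist(z,E)\leq d\}$ be the closed $d$-neighborhood of $E$, and set
$$
\rho(z):=\frac{1}{d}\,\mathbf{1}_{R}(z).
$$

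For admissibility, let $\gamma\in(E,F;\mathbb{C})$, parametrized as $\gamma:[0,1]\to\mathbb{C}$. Then $\gamma(0)\in E\subset R$, while $\gamma(1)\in F$ satisfies $\dist(\gamma(1),E)\geq d$, so $\gamma(1)\notin \mathrm{int}\,R$. Either $\gamma$ stays in $\bar{R}$ throughout, in which case its Euclidean length is at least $|\gamma(0)-\gamma(1)|\geq d$; or $\gamma$ has a first exit time $t^{\ast}\in(0,1)$ from $R$, necessarily with $\dist(\gamma(t^{\ast}),E)=d$, and the sub-arc $\gamma|_{[0,t^{\ast}]}\subset \bar R$ has length at least $|\gamma(0)-\gamma(t^{\ast})|\geq d$. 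In either case $\int_{\gamma}\rho\,|dz|\geq 1$, so $\rho$ is admissible.

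To estimate $\mathrm{area}(R)$ sharply, observe that $R=E+\bar B(0,d)\subset \mathrm{conv}(E)+\bar B(0,d)$, and that $\mathrm{conv}(E)$ is a planar convex body of the same diameter $r$ as $E$. Steiner's formula then gives
$$
\mathrm{area}(R)\leq \mathrm{area}(\mathrm{conv}(E))+d\cdot \mathrm{perim}(\mathrm{conv}(E))+\pi d^{2}.
$$
The planar isodiametric inequality bounds the first summand by $\pi r^{2}/4$, while Cauchy's perimeter formula (the width in every direction is at most $r$) bounds the second by $d\cdot \pi r$. Hence $\mathrm{area}(R)\leq \pi(d+r/2)^{2}$, and so
$$
\m(E,F;\mathbb{C})\leq \int_{\mathbb{C}}\rho^{2}\,dA\leq \frac{\pi(d+r/2)^{2}}{d^{2}}=\pi\Bigl(1+\frac{1}{2\D}\Bigr)^{2}.
$$

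I expect the main obstacle to be the sharp constant: the naive inclusion $R\subset B(e_{0},r+d)$ for any $e_{0}\in E$ only yields the weaker bound $\pi(1+1/\D)^{2}$, and saving the factor of $2$ built into the statement is what forces the passage to the convex hull combined with Steiner's formula and the two planar inequalities above. The other point that requires minor care is the admissibility argument, since $F$ need not lie entirely outside $R$ (part of $\partial R$ may meet $F$), which is why one must split into the two cases based on whether $\gamma$ actually exits $R$.
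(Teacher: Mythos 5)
Your proof is correct. The paper itself gives no proof of this lemma --- it is quoted from \cite{HaSar1} --- so your argument can only be judged on its own terms, and it holds up. The test metric $\rho=\frac{1}{d}\mathbf{1}_{R}$ supported on the closed $d$-neighborhood $R$ of the smaller continuum is the standard choice, and your admissibility argument correctly handles the one delicate point: $F$ may meet $\partial R$, so one must either use $|\gamma(0)-\gamma(1)|\ge\dist(E,F)=d$ when $\gamma$ never leaves $R$, or pass to the first exit time otherwise. You are also right that the constant is the only real issue. The crude inclusion $R\subset B(e_0,r+d)$ gives only $\pi(1+1/\D)^2$, and even Jung's theorem ($E\subset\bar B(x_0,r/\sqrt{3})$) yields $\pi\bigl(1+\tfrac{1}{\sqrt{3}\,\D}\bigr)^2$, still weaker than the stated bound; so some genuinely two-dimensional input is needed. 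Replacing $E$ by $\mathrm{conv}(E)$ (which has the same diameter and a larger $d$-neighborhood) and combining Steiner's formula with the isodiametric inequality ($\mathrm{area}\le\pi r^2/4$) and Cauchy's formula ($\mathrm{perim}\le\pi r$) gives exactly $\mathrm{area}(R)\le\pi(d+r/2)^2$, hence the constant $\tfrac12$. Two minor remarks: your argument never actually uses connectedness of $E$ or $F$, so it proves the inequality for arbitrary compact sets; and the degenerate case $\diam E=0$ (where $\D=\infty$) is trivial since the modulus then vanishes.
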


\begin{corollary} Let $E_n$ and $F_n$, $n\in\mathbb{N},$ be a sequence of pairs of continua in
$\mathbb{C}$.
  If  the sequence $\D(E_n,F_n)$ is bounded away from $0$ then  $\m(E_n,F_n;\mathbb{C})$ is bounded.
\end{corollary}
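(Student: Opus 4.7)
The plan is to apply Lemma \ref{lemma:mod-reldist} directly: the upper bound on modulus provided there is a monotone decreasing function of the relative distance $\D(E,F)$, so a uniform lower bound on $\D(E_n,F_n)$ immediately yields a uniform upper bound on $\m(E_n,F_n;\mathbb{C})$.

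More concretely, suppose $\D(E_n,F_n)\geq c$ for some constant $c>0$ and all $n\in\mathbb{N}$. First I would note that the right-hand side of inequality \eqref{modest:reldistance}, viewed as a function of $\D$, is strictly decreasing on $(0,\infty)$. Substituting $\D(E_n,F_n)\geq c$ then gives
\[
\m(E_n,F_n;\mathbb{C}) \;\leq\; \pi\left(1+\frac{1}{2\D(E_n,F_n)}\right)^{2} \;\leq\; \pi\left(1+\frac{1}{2c}\right)^{2},
\]
which is a constant independent of $n$. This bound depends only on $c$, so the sequence $\m(E_n,F_n;\mathbb{C})$ is bounded as claimed.

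There is essentially no obstacle here, since the content is in Lemma \ref{lemma:mod-reldist}; the corollary is merely an observation about the monotonicity of the bound. The only point worth checking is that $\D(E_n,F_n)$ is well-defined, i.e.\ that $\diam E_n$ and $\diam F_n$ are positive, which is automatic once $E_n,F_n$ are (nondegenerate) continua. If one wished to allow degenerate continua (points), the statement would have to be interpreted with $\D$ possibly infinite, but the inequality still gives a trivial bound $\m \leq \pi$ in that limiting case.
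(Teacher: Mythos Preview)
Your proof is correct and takes essentially the same approach as the paper: the corollary is stated immediately after Lemma~\ref{lemma:mod-reldist} with no proof, since it is an obvious consequence of the monotonicity of the bound in \eqref{modest:reldistance}. Your write-up makes this explicit in exactly the intended way.
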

\begin{remark}
  The previous lemma is very weak for large $\D(E,F)$, since it is in fact
  easy to see that $\m(E,F,\mathbb{C})$ tends to $0$ as $\D(E,F)\to\infty$.
  But we will not need this estimate in the present paper and will refer the
  interested reader to Heinonen's book \cite{Hein} for relations between the
  modulus and relative distance.
\end{remark}
 The following lemma is an easy consequence of the asymptotic properties
of the moduli (cf. \cite{LV}).

\begin{lemma}[cf. \cite{HaSar}]
\label{lem:mod_liouville_measure} Let $(a,b,c,d)$ be a quadruple of points on
$S^1$ in the counterclockwise order. Let $\Gamma_{[a,b]\times [c,d]}$ consist
of all differentiable curves $\gamma$ in $\mathbb{D}$ which connect
$[a,b]\subset S^1$ with $[c,d]\subset S^1$. Then
$$
\m(\Gamma_{[a,b]\times [c,d]})-\frac{1}{\pi}\mathcal{L}([a,b]\times [c,d])-\frac{2}{\pi}\log 4\to 0
$$
as $\m(\Gamma_{[a,b]\times [c,d]})\to\infty$, where $\mathcal{L}$ is the
Liouville measure.
\end{lemma}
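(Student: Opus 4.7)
The plan is to reduce the asymptotic to a one-parameter family via conformal invariance, and then compute both sides in closed form using classical elliptic integrals. Both $\m(\Gamma_{[a,b]\times[c,d]})$ and $\mathcal{L}([a,b]\times[c,d])$ are invariant under M\"obius automorphisms of $\mathbb{D}$ (for the modulus this is standard; for the Liouville measure it is built into its definition). I would therefore pass to the upper half-plane and use a real M\"obius map to normalize the four cyclically ordered points to $(a,b,c,d)=(-1,-r,r,1)$ for some $r\in(0,1)$. A direct computation then gives
\[
\mathcal{L}([a,b]\times[c,d])=\log\frac{(1+r)^2}{4r},
\]
which tends to $+\infty$ precisely as $r\to 0^+$; this is the regime in which the modulus also blows up.

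Next, I would compute the modulus in closed form using the Schwarz--Christoffel map
\[
f(z)=\int_0^z\frac{dt}{\sqrt{(1-t^2)(r^2-t^2)}},
\]
which, on a suitable branch, sends the upper half-plane conformally onto a rectangle whose four corners are the images of $-1,-r,r,1$. Computing the periods (substitute $t=r\sin\theta$ on $[-r,r]$, and $t\mapsto 1/t$ on $(-\infty,-1]\cup[1,\infty)$) yields horizontal side $2K(r)$ and vertical side $K'(r):=K(\sqrt{1-r^2})$, where $K$ is the complete elliptic integral of the first kind. Under $f$, the family $\Gamma_{[a,b]\times[c,d]}$ becomes the family of curves joining the two vertical sides of the rectangle, so
\[
\m(\Gamma_{[a,b]\times[c,d]})=\frac{K'(r)}{2K(r)}.
\]

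Finally, I would apply the classical asymptotics $K(r)=\tfrac{\pi}{2}+O(r^2)$ and $K'(r)=\log(4/r)+o(1)$ as $r\to 0^+$, which give $\m(\Gamma_{[a,b]\times[c,d]})=\tfrac{1}{\pi}\log(4/r)+o(1)$. On the other hand, rearranging the formula for $\mathcal{L}$,
\[
\frac{1}{\pi}\mathcal{L}([a,b]\times[c,d])+\frac{2}{\pi}\log 4=\frac{1}{\pi}\log\frac{4}{r}+\frac{2}{\pi}\log(1+r),
\]
and since $\log(1+r)=O(r)\to 0$, subtracting yields the claimed limit. The main obstacle is bookkeeping: choosing the Schwarz--Christoffel branches and orientations carefully so that the additive constant $\frac{2}{\pi}\log 4$ lines up exactly. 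The analytic ingredients themselves---M\"obius invariance, the Schwarz--Christoffel representation of the rectangle, and the leading asymptotics of $K$ and $K'$---are entirely classical.
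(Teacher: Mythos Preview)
Your argument is correct. The paper does not actually prove this lemma; it is stated with a citation to \cite{HaSar} and the remark that it is ``an easy consequence of the asymptotic properties of the moduli (cf.\ \cite{LV}).'' Your proof supplies precisely those classical details: M\"obius-normalize the quadruple to $(-1,-r,r,1)$, identify the modulus with $K'(r)/(2K(r))$ via the Schwarz--Christoffel rectangle, and invoke the standard asymptotics $K(r)\to\pi/2$, $K'(r)=\log(4/r)+o(1)$ as $r\to 0^+$. This is exactly the Lehto--Virtanen computation the paper is pointing to, so there is no genuine difference in approach to report. The one step you flag as needing care---that every cyclically ordered quadruple can be normalized to $(-1,-r,r,1)$---is fine: after sending $a\mapsto -1$, $d\mapsto 1$, the residual one-parameter group fixing $\pm 1$ can always symmetrize the remaining pair, since the cross-ratio $\frac{(1+r)^2}{4r}$ sweeps out all of $(1,\infty)$ as $r$ ranges over $(0,1)$.
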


\begin{remark} Note that simultaneously $\m(\Gamma_{[a,b]\times [c,d]})\to\infty$ and $\mathcal{L}([a,b]\times [c,d])\to\infty$. Therefore it is enough to consider the asymptotic behaviour of the modulus in order to find the asymptotic behaviour of the Liouville measure.
\end{remark}

\section{The visual sphere of $T(\mathbb{D})$}

The visual sphere of the universal Teichm\"uller space $T(\mathbb{D})$, by definition,
consists of all {\it unbounded} geodesic rays for the Teichm\"uller metric starting at the basepoint $id\in
T(\mathbb{D})$. If a geodesic ray is a {\it Teichm\"uller ray} $t\mapsto
t\frac{|\varphi |}{\varphi}$ for $t\in [0,1)$ and
$\varphi$ integrable holomorphic quadratic differential, then the limit on Thurston's boundary equals to the projective class of $\mu_{\varphi}\in ML_{bdd}(\mathbb{D})$ (cf. \cite{HaSar1}). If $\eta$ is
an extremal Beltrami coefficient in its Teichm\"uller class, then  $t\mapsto
t\eta$ for $t\in [0,\frac{1}{\|\eta\|_{\infty}})$ defines a geodesic ray (cf.
\cite{GL}) and it corresponds to a single point on the visual sphere. An
interesting question is whether there exists a point on Thurston's boundary
to which the geodesic ray defined by an extremal Beltrami coefficient (not given in the Teichm\"uller form $k\frac{|\varphi |}{\varphi}$)
converges. We consider two examples of such geodesic rays both given by $t\mapsto
t\frac{|\varphi |}{\varphi}$ for $t\in [0,1)$, where $\varphi$ is a holomorphic quadratic differential that is not integrable on $\mathbb{D}$.

\subsection{The horizontal strip}

Consider a holomorphic quadratic differential $\varphi (z)dz^2=dz^2$ on
the horizontal strip $S=\{z:0<Im (z)<1\}$. Strebel (cf. \cite{Str}) proved
that the corresponding Beltrami coefficient $k\frac{|\varphi (z) |}{\varphi
(z)}=k$ is extremal. Note that $dz^2$ is not integrable since the euclidean
area of $S$ is infinite. We consider two geodesic rays: the shrinking
$T_{\epsilon}$ along the vertical foliation by the factor $\epsilon >0$ as
$\epsilon\to 0^{+}$ and the stretching $T_{1/\epsilon}$ along the vertical
foliation by the factor $\epsilon >0$ as $\epsilon\to 0^{+}$.

\begin{theorem}
\label{thm:strip}
Let $\nu_1$ be the (hyperbolic) measured lamination on $S$ whose support is
homotopic to the vertical foliation of $dz^2$ on $S$ and whose transverse
measure is given by the length in the natural parameter of the transverse horizontal set.
Let $\nu_2$ be the dirac measured lamination on $S$ with support the
hyperbolic geodesic homotopic to a horizontal trajectory in $S$. Then we have
$$
\epsilon (T_{\epsilon})^{*}(\mathcal{L})\to \nu_1
$$
and
$$
\epsilon^{*} (T_{1/\epsilon})^{*}(\mathcal{L})\to \nu_2
$$
where $\epsilon^{*}\to 0^{+}$ as $\epsilon\to 0^{+}$; $(T_{\epsilon})^{*}(\mathcal{L})$ is the pull-back of the Liouville geodesic current by the boundary map of $T_{\epsilon}$; similar for $(T_{1/\epsilon})^{*}(\mathcal{L})$. The convergence is in the weak* topology.
\end{theorem}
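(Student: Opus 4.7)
The plan is to verify the claimed weak-$*$ convergences by testing on boxes of geodesics $B=[a,b]\times[c,d]$, which form a convergence-determining family for bounded geodesic currents. Since $(T_\epsilon)^*\mathcal{L}(B)=\mathcal{L}(T_\epsilon(B))$ where $T_\epsilon$ acts on $\partial S$ through its quasisymmetric boundary extension, Lemma~\ref{lem:mod_liouville_measure} reduces the large-Liouville-mass asymptotics of $\mathcal{L}(T_\epsilon B)$ to those of the conformal modulus $m(\Gamma_{T_\epsilon B})$. The first step is to compute this modulus by conformally transplanting the target strip onto the upper half-plane: in the natural coordinate of $\varphi=1$ the quasiconformal map $T_\epsilon$ is simply $(x,y)\mapsto(x,\epsilon y)$, landing in $S_\epsilon=\{0<\mathrm{Im}\,z<\epsilon\}$. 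Composing with $z\mapsto e^{\pi z/\epsilon}$ identifies $S_\epsilon$ conformally with the upper half-plane, sending the lower and upper boundaries of the strip to the positive and negative real rays, and the ends $\pm\infty$ to $\infty$ and $0$. The four images of $a,b,c,d$ are then explicit exponentials of their $x$-coordinates, and $\mathcal{L}(T_\epsilon B)$ equals the logarithm of their cross-ratio.

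For the shrinking ray $T_\epsilon$, the dominant contribution to this cross-ratio appears only when one arc of $B$ sits on the lower boundary of $S$ and the other on the upper boundary, with $x$-projections overlapping in an interval of length $L>0$. A direct computation shows $\mathcal{L}(T_\epsilon B)=\pi L/\epsilon+O(1)$ in that case and $\mathcal{L}(T_\epsilon B)=O(1)$ otherwise. Multiplying by $\epsilon$ and passing to the limit singles out exactly the mass $\nu_1$ assigns to the box (the natural-parameter length of the horizontal arc transverse to the vertical trajectories whose endpoints lie in $[a,b]\cup[c,d]$), yielding $\epsilon(T_\epsilon)^*\mathcal{L}\to\nu_1$ after absorbing the factor of $\pi$ into the normalization of $\nu_1$.

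For the stretching ray $T_{1/\epsilon}$ the same transplant, now via $z\mapsto e^{\pi\epsilon z}$, linearizes near $\epsilon=0$, and the cross-ratio has a qualitatively different leading behaviour. Boxes $B$ for which $[a,b]$ contains one end $+\infty$ and $[c,d]$ contains the other end $-\infty$ (equivalently, $B$ contains the horizontal geodesic $\gamma_h$) produce a log-cross-ratio of the form $-2\log\epsilon+O(1)$, with the leading coefficient \emph{independent} of the box; all other boxes give a cross-ratio that stays bounded and converges to a finite half-plane cross-ratio. Setting $\epsilon^*=-1/(2\log\epsilon)\to 0^+$, this uniformity delivers $\epsilon^*\mathcal{L}(T_{1/\epsilon}B)\to 1$ on every box containing $\gamma_h$ and $\to 0$ otherwise, which is exactly the Dirac lamination $\nu_2$.

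The principal difficulty, foreshadowed in Remark 1, is that the $\varphi$-metric is too thin to serve as an admissible metric on the infinite-area strip (curves can avoid any natural rectangle by detouring through the infinite ends), so the admissible-metric estimates used in the integrable case do not carry over. The conformal transplant to the upper half-plane bypasses admissibility entirely by turning moduli into explicit cross-ratios, and this is what I expect to be the key new ingredient. The remaining technical point is to verify that the $-2\log\epsilon$ leading term in the stretching case is genuinely uniform across all boxes containing $\gamma_h$, including those where an arc only barely overshoots one of the ends $\pm\infty$, and to carry out a parallel case analysis for boxes whose arcs straddle both a horizontal boundary and an end.
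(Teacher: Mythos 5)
Your proposal is correct, but it reaches the conclusion by a genuinely different route from the paper. The shared skeleton is the reduction to box-wise convergence and the observation that stretching the vertical direction by $1/\eps$ is conformally the same as shrinking the horizontal direction by $\eps$; after that the methods diverge. The paper never computes anything explicitly: it fixes the ``end-to-end'' box $\G^{\eps}_{I_0,J_0}$, sets $\eps^{*}=1/\m\G^{\eps}_{I_0,J_0}$ without evaluating it, and proves $\m\G^{\eps}_{I,J}/\m\G^{\eps}_{I_0,J_0}\to 1$ or $0$ by soft modulus arguments (monotonicity, subadditivity, overflowing onto annuli and rectangles, and the relative-distance bound of Lemma \ref{lemma:mod-reldist}); the first convergence is simply quoted from the integrable case. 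You instead transplant $S_{\eps}$ (resp.\ $S_{1/\eps}$) onto the upper half-plane by $z\mapsto e^{\pi z/\eps}$ (resp.\ $z\mapsto e^{\pi\eps z}$) and read off $\mathcal{L}(T_{\eps}B)$ as an exact logarithm of a cross-ratio of explicit exponentials. This is sound: the Liouville mass of a box is M\"obius-invariant, so the half-plane cross-ratio computes it exactly, and your leading terms $\pi L/\eps+O(1)$ and $-2\log\eps+O(1)$ (the latter uniform over boxes containing $\g_h$, since all four vertices then have finite natural-parameter coordinates) are correct; the detour through Lemma \ref{lem:mod_liouville_measure} and the modulus is actually unnecessary in your scheme and only introduces an avoidable $o(1)$. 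What your approach buys is explicitness -- a closed-form rate $\eps^{*}\asymp -1/\log\eps$ and a self-contained proof of the first convergence rather than a citation -- at the cost of depending on having an elementary Riemann map, which is why the paper's softer comparison method is the one that survives to the chimney domain. Two loose ends you correctly flag and must still discharge: the full case analysis for boxes whose arcs contain one or both ends of the strip (these arcs meet both boundary lines, so ``one arc on each boundary component'' is an oversimplification, though the same cross-ratio computation with $\max$'s of the exponents handles them), and the multiplicative constant $\pi$ relating $\lim\eps\,\mathcal{L}(T_{\eps}B)$ to the natural-parameter length, which is harmless for the projective statement but should be stated consistently with the normalization of $\nu_1$.
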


The first convergence follows directly from the considerations for integrable holomorphic quadratic differentials in \cite{HaSar} and \cite{HaSar1}.
It remains to prove the second convergence in the above Theorem. We note that stretching the vertical direction by $1/\epsilon$ is equivalent to shrinking the horizontal direction by $\epsilon$.

\begin{figure}
\noindent\makebox[\textwidth]{%
\includegraphics[width=12cm]{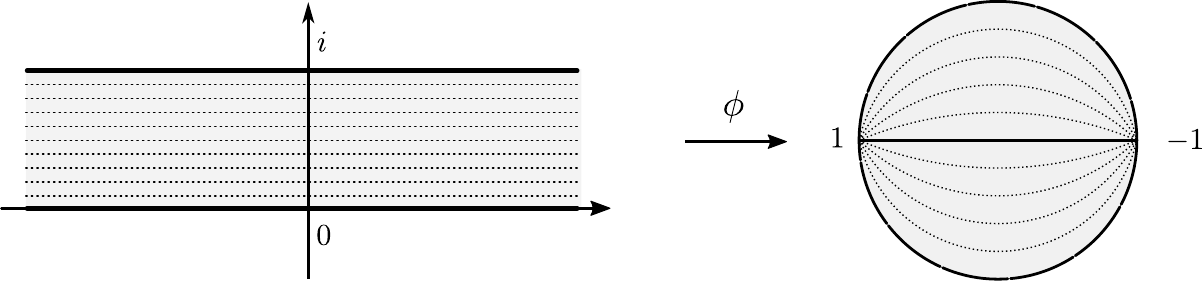}
}
\caption{Horizontal strip $S$ and its Riemann map. The dotted lines represent the horizontal trajectories of the standard quadratic differential $dz^2$ in $S$ and their images in $\mathbb{D}$ under $\phi$. }
\end{figure}%

For a pair of intervals of prime ends $I,J\subset \partial S$ we denote by
$\G_{I,J}$ the family of curves connecting $I$ and $J$ in the strip $S$, i.e.
$\G_{I,J}=(I,J; S)$ according to the notation used in Section
\ref{section:background}. Let $\G_{I,J}^{\eps} = H_{\eps}(\G_{I,J})$, where
$H_{\eps}(x,y) = (\eps x, y)$. Since $H(S)=S$ we have for $\eps>0$
\begin{align}
  \G_{I,J}^{\eps} = (H_{\eps}(I),H_{\eps}(J);S).
\end{align}

 We will denote by
$\phi$ the Riemann mapping from S to the unit disc $\mathbb{D}$. By
Caratheodory's theorem $\phi$ extends to $\partial{S}$ and we will denote
the extension by $\phi$ as well. Note that $\phi$  can be chosen to satisfy
the following properties for $x\in\mathbb{R}$:
\begin{eqnarray}
  \phi(0)&=&-i,\\
  \phi(i)&=&i,\\
  \phi(x+iy)&\to& \pm 1, \mbox{ as } x\to\pm\infty, y\in(0,1).
\end{eqnarray}

Let
\begin{eqnarray*}
I_0&=& (-\infty, 0) \cup (-\infty+i, i),\\
J_0&=& (1,\infty) \cup (1+i,\infty+i).
\end{eqnarray*}

Recall that a sequence of Borel measures $m_k$ on $S^1\times S^1-diag$ converges in the weak* topology to a Borel measure $m$ if for every box $[a,b]\times [c,d]$ with $m(\partial ([a,b]\times [c,d]))=0$ we have $m_k([a,b]\times [c,d])\to m([a,b]\times [c,d])$ as $k\to\infty$. Then
Theorem \ref{thm:strip} follows directly from the next lemma and the fact that $\m \G_{I_0,J_0}^{\eps}\to\infty$ as $\epsilon\to 0^{+}$ (cf. (\ref{eq:iojo})) by setting $\eps^{*}=1/\m \G_{I_0,J_0}^{\eps}$.

\begin{lemma}
If $I,J \subset \partial S$ are disjoint intervals of prime ends s.t.
$\m\G_{I,J}<\infty$ and the endpoints of $\phi(I)$ and $\phi(J)$ are disjoint
from $1$ and $-1$ then
\begin{equation}\label{horizontal}
  \lim_{\eps\to0}\frac{\m \G_{I,J}^{\eps}}{\m \G_{I_0,J_0}^{\eps}} =
  \begin{cases}
    1, & \mbox{\rm{ if }} -1\in\phi(I) \mbox{ \rm{and} } 1\in\phi(J),\\
   1, & \mbox{\rm{ if }} 1\in\phi(I) \mbox{ \rm{and} } -1\in\phi(J),\\
    0, & \mbox{\rm{ otherwise }}.
  \end{cases}
\end{equation}
\end{lemma}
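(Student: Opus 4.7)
The plan is to treat the two cases of \eqref{horizontal} separately, in both reducing $\G_{I,J}^\eps$ to moduli of canonical quadrilaterals via monotonicity. To set this up, introduce $K_s = (-\infty,s)\cup(-\infty+i,s+i)$ and $L_t = (t,\infty)\cup(t+i,\infty+i)$, so that $I_0 = K_0$, $J_0 = L_1$, and $H_\eps(K_s) = K_{\eps s}$, $H_\eps(L_t)=L_{\eps t}$. By translation-invariance of $S$, $f(u) := \m\G_{K_0, L_u;S}$ depends only on $u$, and $\m\G_{I_0,J_0}^\eps = f(\eps)$.

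For \emph{Case 1} ($-1\in\phi(I)$ and $1\in\phi(J)$, the other sub-case being symmetric), the hypothesis forces $I$ to straddle the prime end at $-\infty$ with finite top/bottom endpoints, and similarly for $J$ at $+\infty$. Letting $\alpha,\beta$ be the min/max of the two endpoints of $I$ and $\gamma,\delta$ likewise for $J$, one has $K_\alpha\subset I\subset K_\beta$ and $L_\delta\subset J\subset L_\gamma$, with $\beta<\gamma$ by disjointness. Monotonicity of modulus then sandwiches
\[
f(\eps(\delta-\alpha)) \leq \m\G_{I,J}^\eps \leq f(\eps(\gamma-\beta)),
\]
reducing the task to showing $f(c\eps)/f(\eps)\to 1$ for every $c > 0$. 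By Lemma \ref{lem:mod_liouville_measure} this reduces to the analogous statement for the Liouville measure $\mathcal{L}(\phi K_0\times\phi L_u)$. I would compute this as an explicit cross-ratio of the four boundary points $\phi(0)=-i$, $\phi(i)=+i$, $\phi(u)$, $\phi(u+i)$; using the Taylor expansions $\phi(u) = -i + \phi'(0)u + O(u^2)$ and $\phi(u+i) = i + \phi'(i)u + O(u^2)$, two of the four differences are $O(u)$ and two are $O(1)$, yielding $\mathcal{L}(\phi K_0\times\phi L_u)=2|\log u|+O(1)$. Hence $f(u) = \tfrac{2}{\pi}|\log u|+O(1)$, which immediately gives $f(c\eps)/f(\eps)\to 1$. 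As a byproduct this confirms $\m\G_{I_0,J_0}^\eps\to\infty$, the divergence used to define $\eps^*$.

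For \emph{Case 2}, the failure of the separation hypothesis together with the disjointness of $I, J$ forces at least one of them, say $J$, to be a bounded interval of prime ends lying entirely on the bottom or top of $\partial S$. Then $\phi(H_\eps J)$ has $\diam O(\eps)$ and converges to one of $\pm i \in S^1\setminus\{\pm 1\}$. A brief case inspection---whether $I$ is bounded or not, on which side, whether its endpoints approach the same limit point as $\phi(H_\eps J)$, etc.---shows that $\dist(\phi H_\eps I, \phi H_\eps J)$ is always at least of the same order as $\diam \phi(H_\eps J)$, so the relative distance $\D(\phi H_\eps I, \phi H_\eps J)$ stays bounded away from $0$. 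By conformal invariance of the modulus and the corollary to Lemma \ref{lemma:mod-reldist}, this yields $\m\G_{I,J}^\eps = O(1)$; combined with $\m\G_{I_0, J_0}^\eps\to\infty$ from Case 1, the ratio tends to $0$.

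The main technical obstacle I anticipate is the Liouville-measure asymptotic in Case 1: passing from ``$f(u)\to\infty$'' to the stability $f(c\eps)/f(\eps)\to 1$ requires the sharp form $\mathcal{L}=2|\log u|+O(1)$, which rests on $\phi$ being differentiable with non-vanishing derivative at the two interior prime ends $0$ and $i$. Once this sharp asymptotic is in hand, both the sandwich in Case 1 and the relative-distance estimate in Case 2 are essentially mechanical.
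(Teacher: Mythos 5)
Your overall strategy is sound and genuinely different from the paper's in the main case. Where the paper never computes any asymptotics---it sandwiches $\m\G^{\eps}_{I,J}$ between $\m\G^{\eps}_{I\cap I_0,J\cap J_0}$ and that quantity plus error terms, and kills the errors by showing each is \emph{bounded} (via subadditivity and the relative-distance/annulus estimates)---you instead derive the sharp asymptotic $f(u)=\tfrac{2}{\pi}|\log u|+O(1)$ from Lemma \ref{lem:mod_liouville_measure} and a cross-ratio expansion of $\phi$ at $0$ and $i$. That computation is legitimate ($\phi$ extends analytically with nonvanishing derivative across the straight boundary arcs, e.g.\ via $z\mapsto e^{\pi z}$), and it buys you something the paper does not state: an explicit rate $\m\G^{\eps}_{I_0,J_0}\sim\tfrac{2}{\pi}\log\tfrac1\eps$, hence $\eps^{*}\sim\tfrac{\pi}{2}(\log\tfrac1\eps)^{-1}$. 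Your Case 2 is essentially the paper's Case 1 transported to the disc by $\phi$ rather than handled in the strip by rescaling and annuli; it works, though ``a brief case inspection'' is doing real work there and silently uses bi-Lipschitz control of $\phi$ away from $\pm\infty$.

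There is, however, one genuine gap: the claim that ``$\beta<\gamma$ by disjointness'' in Case 1 is false. Writing $I=(-\infty,a)\cup(-\infty+i,b+i)$ and $J=(c,\infty)\cup(d+i,\infty+i)$, disjointness of the prime-end intervals only forces $a\le c$ and $b\le d$; it does not prevent, say, $a=0$, $c=1$, $b=5$, $d=6$, in which case $\beta=\max(a,b)=5>1=\min(c,d)=\gamma$. Then $K_\beta$ and $L_\gamma$ overlap, $\m\G^{\eps}_{K_\beta,L_\gamma}=\infty$, and the upper half of your sandwich is vacuous, so $f(c\eps)/f(\eps)\to1$ gives you nothing. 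The lower bound survives, but to recover the upper bound you must split off the ``overhang'': e.g.\ $J\subset L_{\beta+1}\cup J'$ with $J'$ a union of at most two bounded intervals each lying on a single boundary component and at positive distance from $I$ (positive because $\m\G_{I,J}<\infty$ rules out touching closures), then use subadditivity and show $\m\G^{\eps}_{I,J'}$ is bounded by exactly the relative-distance argument of your Case 2. In other words, the fix is the subadditivity-plus-boundedness step that the paper's proof is organized around; without it your Case 1 is incomplete as written.
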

\begin{proof} First we show that if $-1\in\phi(I)$ and $1\in\phi(J)$ then $
\m\G_{I,J}^{\eps}\to\infty$ as $\eps\to0$. For this let $I'\subset\mathbb{R}$
and $I''\subset(\mathbb{R}+i)$ be the two complementary intervals of the set
$I\cup J$ in $\partial{S}$. Then, since the curves connecting $I$ and $J$ are
exactly those which separate $I'$ from $I''$, it follows that
$$\m\G_{I',I''}^{\eps}=(\m\G_{I,J}^{\eps})^{-1},$$
and it is enough to show that $\m\G_{I',I''}^{\eps}\to0$ as $\eps\to0$. By
monotonicity of modulus we have
$$\m\G_{I',I''}^{\eps}=\m\G_{\eps I', \eps I''} \leq \m\G_{\eps
I',\mathbb{R}+i},$$ where $\eps I' = \{\eps x \in\mathbb{R} : x\in I'\}$ and
$\eps I'' = \{\eps x +i \in\mathbb{R}+i : x+i\in I''\}$. To show that the
last quantity tends to zero let $z_{\eps}$ be the center of the interval
$\eps I'\subset\mathbb{R}$ and denote by $\G_{\eps I'}$ the family of curves
connecting the boundary components of the annulus
$$\left\{z\in\mathbb{C} : \frac{\eps|I'|}{2}<|z-z_{\eps}|<1\right\}.$$
 Since $\G_{\eps I',\mathbb{R}+i}$ overflows $\G_{\eps I'}$, we have
$$\m\G_{\eps I',\mathbb{R}+i} \leq\m \G_{\eps I'} \leq
\frac{2\pi}{\log\frac{2}{\eps|I|}} \to 0, \mbox{ as } \eps\to0.$$ Therefore
$\m\G_{I,J}^{\eps}=(\m\G_{I',I''}^{\eps})^{-1}\to\infty$ and in particular
the denominator in (\ref{horizontal}) also tends to $\infty$.

\noindent{\textbf{Case 1}}: Suppose
\begin{align}\label{horizontal:case1}
-1\notin\phi(I)\cup\phi(J).
\end{align}
 We want
to show that in this case the limit in (\ref{horizontal}) is $0$. Since the
denominator of the quotient in (\ref{horizontal}) tends to $\infty$ it will
suffice to demonstrate that $\m\G_{I,J}^{\eps}$ stays bounded as $\eps\to0$.
We consider the following subcases:

\textit {Case ${1.1}$:} Suppose, in addition to (\ref{horizontal:case1}), we
also have
\begin{align}
1\notin\phi(I)\cup\phi(J).
\end{align}
In particular, we have $\max(\diam I, \diam J) <\infty$.  Thus, $I$ and $J$
are two bounded length intervals belonging either to the same boundary
component of $\partial{S}$ or to different components.

If $I$ and $J$ belong to the same component of $\partial S$ (assume this
 component is $\mathbb{R}$)  then considering the maps $F_{\eps}=(\eps^{-1}
 {id}){\circ H_{\eps}}$ we see that ${F_{\eps}}$ restricted to $\mathbb{R}$ is the
 indentity, and in particular $H_{\eps}(I)=I$ and $H_{\eps}(J)=J$. Therefore,
 by conformal invariance of $\eps^{-1}id$ we have,
 $$ \m \G_{I,J}^{\eps} = \m(H_{\eps}(I),H_{\eps}(J);H_{\eps}(S)) = \m(I,J;F_{\eps}(S))\leq \m
  (I,J;\mathbb{C}),$$ where, as before, $(I,J;\O)$ denotes the collection of curves
connecting $I$ and $J$ in the domain $\O$. Since $I$ and $J$ are bounded
fixed intervals a certain distance apart, we have that $\D(I,J)>0$ and
inequality (\ref{modest:reldistance}) implies that $\m(I,J;\mathbb{C})$ is
finite and therefore $\m \G_{I,J}^{\eps}$ is bounded for all $\eps>0$ and
(\ref{horizontal}) holds in this case.

%Since, ${\m \G_{I_0,J_0}^{\eps}}\to \infty$ as $\eps\to0$ we have that
%$\frac{\m \G_{I,J}^{\eps}}{{\m \G_{I_0,J_0}^{\eps}}} \to 0$ as $\eps\to0$.
%
%
%If $I\subset\mathbb{R}$ and $J\subset \mathbb{R} +i$ are bounded intervals
%then as was shown in the beginning of the proof we have $\m \G_{I,J}^{\eps}
%\leq \m \G_{I,\mathbb{R}+i}^{\eps}\leq\m\G_{\eps I,\mathbb{R}+i}\to0$ as
%$\eps\to0$.
%since $\eps I \subset (-1,1)$ for small enough $\eps>0$. Therefore dividing
%by $\m \G_{I_0,J_0}^{\eps}$ the result tends to $0$. In fact,
%$\m\G_{I,\mathbb{R}+i}^{\eps}\to0$ as $\eps\to0$, but we do not this this.

%\begin{equation*}
%\m \G_{I,J}^{\eps} \leq \m \G_{I,\mathbb{R}+i}^{\eps} = \m \G_{\eps I,\mathbb{R}+i}.
%\end{equation*}
%Since for every $\d>0$ we have $\eps I \subset (-\d,\d)$ for $\epsilon$
%small enough, it follows that $\m\G_{\eps I,\mathbb{R}+i} \lesssim
%(\log(\d^{-1}))^{-1}\to0$ as $\d\to0$.

\textsc{Case 1.2:} Suppose
\begin{align}
1\in \phi(I)\cup\phi(J).
\end{align}
Without loss of generality we may assume that
    $1\in\phi(J)$ and by (\ref{horizontal:case1}) then $I$ belongs to one of the components of $\partial{S}$, say $\mathbb{R}$, and  $\diam I<\infty$. By our normalization of $\phi$, this
    means that
    \begin{eqnarray*}
      I&=&(a,b),\\
      J&=&(c,\infty) \cup (d+i,\infty+i).
    \end{eqnarray*}
By subadditivity and monotonicity of modulus we have
\begin{eqnarray*}
  \m\G_{I,J}^{\eps}
  &\leq&
  \m\G_{I,(c,\infty)}^{\eps} + \m\G_{I,(d+i,\infty+i)}^{\eps}\\
  &\leq&
  \m\G_{I,(c,\infty)}^{\eps} + \m\G_{I,\mathbb{R}+i}^{\eps}.
\end{eqnarray*}
Just like in the beginning of the proof,
$\m\G_{I,\mathbb{R}+i}^{\eps}\leq c(\log\frac{2}{\eps\diam I})^{-1}\to0$.
Moreover, considering the maps $(\eps^{-1} {id}){\circ H_{\eps}}$  again, we
see that
$$ \m
\G_{I,(c,\infty)}^{\eps} \leq \m(H_{\eps}(I),H_{\eps}((c,\infty));H_{\eps}(S))\leq \m (I,(c,\infty);\mathbb{C}).$$

Since $\D(I,(c,\infty))>0$, we have  $\m (I,(c,\infty);\mathbb{C})<\infty$
and $\m\G_{I,(c,\infty)}^{\eps}$ and
$\m\G_{I,J}^{\eps}$ are bounded as $\eps\to0$ in this case as well.\\

\noindent\textbf{Case 2}: $\{-1,1\}\subseteq \phi(I) \cup \phi(J)$. Assume,
without loss of generality, that
$$-1\in\phi(I) \mbox{ and } 1\in\phi(J),$$
i.e. there are real numbers $a,b,c,d\in\mathbb{R}$ s.t.
\begin{eqnarray*}
I= (-\infty, a) \cup (-\infty+i, b+i) \mbox{ and }
J= (c,\infty) \cup (d+i,\infty+i).
\end{eqnarray*}
Note, that
$$-1\in\phi(I\cap I_0) \mbox{ and } 1\in\phi(J\cap J_0).$$
Therefore,
\begin{equation}
\label{eq:iojo}\lim_{\eps\to0}\m\G^{\eps}_{I\cap I_0,J\cap J_0}= \infty.
\end{equation}
By monotonicity and subadditivity of modulus we have
\begin{align}
  \begin{split}\m \G^{\eps}_{I\cap I_0,J\cap J_0} \leq  \m \G^{\eps}_{I,J}
  \leq& \, \m \G^{\eps}_{I\cap I_0,J\cap J_0}\\
  + & \, \m \G^{\eps}_{I\setminus I_0,J\cap J_0} + \m \G^{\eps}_{I, J\setminus J_0}, \label{modulusineq1}
  \end{split}\\
   \begin{split}\m \G^{\eps}_{I\cap I_0,J\cap J_0} \leq  \m \G^{\eps}_{I_0,J_0}
  \leq & \,\m \G^{\eps}_{I\cap I_0,J\cap J_0} \\
  + &\,\m \G^{\eps}_{I_0\setminus I,J_0\cap J} + \m \G^{\eps}_{I_0, J_0\setminus J} \label{modulusineq2}.
  \end{split}
\end{align}
Since $-1\in\phi(I\cap I_0)$, we may write $I\setminus I_0=I_1\cup I_2$ where
$I_1,I_2$ are (possibly empty) finite length intervals. By subadditivity, we
have
$$\m \G^{\eps}_{I\setminus I_0,J\cap J_0}\leq \m \G^{\eps}_{I_1,J_0\cap J}+\m \G^{\eps}_{I_1,J_0\cap J}.$$
Now, by Case 1.1 ($\diam I_i <\infty$ and $1\in\phi(J\cap J_0)$) we have that
$\m \G^{\eps}_{I_1,J_0\cap J}$ and $\m \G^{\eps}_{I_1,J_0\cap J}$ are both
bounded and therefore, so is $\m \G^{\eps}_{I\setminus I_0,J\cap J_0}$. Thus,
\begin{align*}
  \lim_{\eps\to0}\frac{\m \G^{\eps}_{I\setminus I_0,J\cap J_0}}{\m \G^{\eps}_{I\cap I_0,J\cap J_0}}=0
\end{align*}
%$$\frac{\m \G^{\eps}_{I\setminus I_0,J\cap J_0}}{\m \G^{\eps}_{I\cap I_0,J\cap J_0}}
%\leq \frac{\m \G^{\eps}_{I_1,J\cap J_0} +\m \G^{\eps}_{I_2,J\cap
%J_0}}{{\m \G^{\eps}_{I\cap I_0,J\cap J_0}}} \to 0, \mbox{ as } \eps\to0,$$
%since the numerator is bounded, while the denominator tends to $\infty$.
%
The same argument also shows that
$$\lim_{\eps\to0}\frac{\m \G^{\eps}_{I,J\setminus J_0}}{\m \G^{\eps}_{I\cap I_0,J\cap J_0}}=0.$$
%
%
% $\m \G^{\eps}_{I\setminus I',J'}$ and $\m \G^{\eps}_{I, J\setminus
%J'}$ are bounded for small $\eps$,
%
%Since $\m \G^{\eps}_{I,J}\to\infty$ as
%$\eps\to0$ it follows that
Therefore, dividing all the terms in (\ref{modulusineq1}) by $\m
\G^{\eps}_{I\cap I_0,J\cap J_0}$ and taking $\eps\to0$, results in
$$ \lim_{\eps\to0}\frac{\m \G^{\eps}_{I,J}}{\m \G^{\eps}_{I\cap I_0,J\cap J_0}}=1.$$
Similarly, (\ref{modulusineq2}) implies
$$ \lim_{\eps\to0}\frac{\m \G^{\eps}_{I_0,J_0}}{\m \G^{\eps}_{I\cap I_0,J\cap J_0}}=1,$$
and combining the last two equalities we obtain
$$ \lim_{\eps\to0}\frac{\m \G^{\eps}_{I,J}}{\m \G^{\eps}_{I_0,J_0}} =
\lim_{\eps\to0} \frac{\m \G^{\eps}_{I,J}}{\m \G^{\eps}_{I\cap I_0,J\cap J_0}} \left( \lim_{\eps\to0}\frac{\m \G^{\eps}_{I_0,J_0}}{\m \G^{\eps}_{I\cap I_0,J\cap J_0}} \right)^{-1}  =1,$$
as required.
%
%\textit{Case 2(b)}:  Without loss of generality assume $I=(-\infty,a) \cup(-\infty+i, b+i)$ and $J=(c,\infty)$. Then
%$$\m \G_{(-\infty,a), J}^{\eps} \leq \m \G_{I,J}^{\eps} \leq \m \G_{(-\infty,a), J}^{\eps} + \m \G_{(-\infty+i, b+i), J}^{\eps},$$
%and the Case 1(b) implies that $\lim_{\eps\to0}\frac{\m \G^{\eps}_{I,J}}{\m \G^{\eps}_{I_0,J_0}}=1$ in this case as well.
%
%\textit{Case 3}: Note that the same way as in the case 2(a) we can show that
%%
%$$\lim_{\eps\to0}\frac{\m\G_{I,J}^{\eps}}{\m\G_{I_1,J_1}^{\eps}} = 1,$$
%where
%\begin{eqnarray*}
%I_1 &=& (-\infty,0) \cup (-\infty+i, i),\\
%J_1 &=& (1,\infty) \cup (1+i,\infty+i).
%\end{eqnarray*}
%Therefore
%$$\lim_{\eps\to0}\frac{\m \G^{\eps}_{I,J}}{\m \G^{\eps}_{I_0,J_0}}=\lim_{\eps\to0}\frac{\m \G^{\eps}_{I_1,J_1}}{\m \G^{\eps}_{I_0,J_0}}.$$
%Now, since the family $\G_{I_1,J_1}^{\eps}$ is symmetric with respect to the line $y=1/2$ it follows from the symmetry principle (see Garnett, Marshall  page 137) that
%$$\m\G_{I_1,J_1}^{\eps} =  2 \m \bar{\G}_{I_0,J_0}^{\eps},$$
%where $\bar{\G}_{I_0,J_0}^{\eps}$ is the family of curves connecting $I_0$ to $J_0$ in the strip $\{0<y<1/2\}$. Also, by using the mapping $z\mapsto 2z$ we see that
%$$\m \bar{\G}_{I_0,J_0}^{\eps} = \m {\G}_{(\infty,0),(2\eps,\infty)}.$$
%And therefore
%$$\lim_{\eps\to0}\frac{\m \G^{\eps}_{I,J}}{\m \G^{\eps}_{I_0,J_0}}=2 \lim_{\eps\to0}\frac{\m \G^{\eps}_{(-\infty,0),(2,\infty)}}{\m \G^{\eps}_{I_0,J_0}}=2,$$
%by case 2(b) above.
\end{proof}

%\newpage

\subsection{The Strebel's chimney domain}

Let
$$C=\{ z:Im(z)<0\}\cup\{ z:|Re(z)|<1\}$$ be the Strebel's chimney domain (cf. \cite{GL}).
The holomorphic quadratic differential $\varphi (z)dz^2=dz^2$ is not
integrable on $C$. However, Strebel proved that the corresponding Beltrami
coefficient $k\frac{|\varphi |}{\varphi}$ is extremal.

We  denote by $\phi$ the Riemann mapping from $C$ to the unit disc
$\mathbb{D}$. By Caratheodory's theorem $\phi$ extends to the $\partial{C}$
and we will denote the extension by $\phi$ as well. Note that $\phi$  can be
chosen to satisfy the following properties for $z\in C$:
\begin{eqnarray*}
  \phi(\pm 1)&=&\pm1,\\
  \phi(z)&\to&+i, \mbox{ if } |z|\to\infty \mbox{ and } Im(z)>0,\\
  \phi(z)&\to&-i, \mbox{ if } |z|\to\infty \mbox{ and } Im(z)<0.
\end{eqnarray*}

\begin{figure}
\noindent\makebox[\textwidth]{%
\includegraphics[width=12cm]{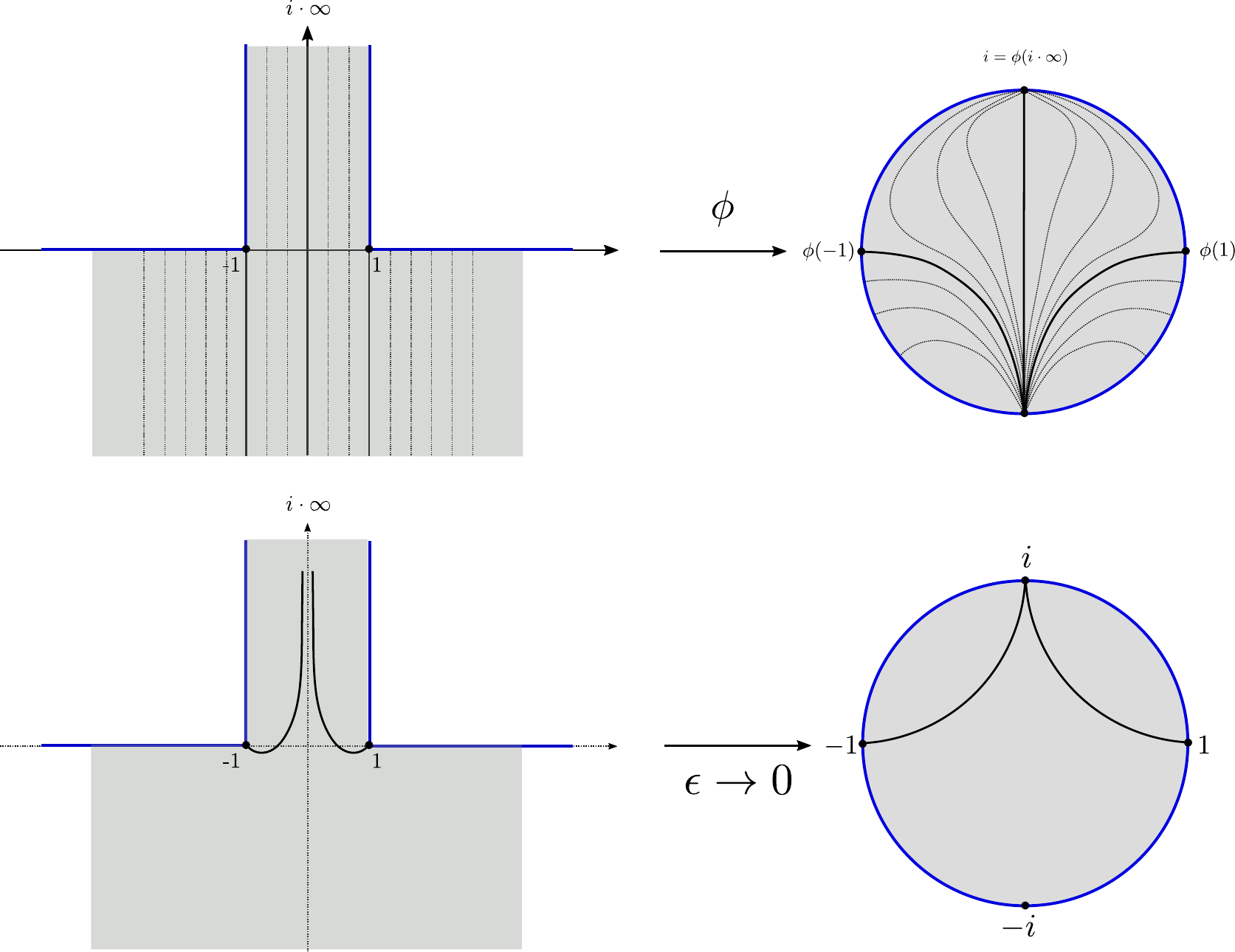}
}
\caption{Strebel's chimney domain and its Riemann map. The dotted lines represent the vertical trajectories of the standard quadratic differential $dz^2$ in $C$ and their images in $\mathbb{D}$ under $\phi$. }
\end{figure}%

\begin{theorem}
\label{thm:chimney}
Let $\nu$ be a measured lamination on $C$ which is a sum of two Dirac
measured laminations supported on geodesics $\g_1$ and $\g_2$ in $C$, where
$\phi(\g_1),\phi(\g_2)$ are the hyperbolic geodesics in $\mathbb{D}$
connecting $i$ to $-1$ and $1$, respectively. Then
$$
\epsilon^{*} (T_{\epsilon})^{*}(\mathcal{L})\to \nu
$$
where $\epsilon^{*}\to 0$ as $\epsilon\to 0^{+}$ and $T_{\epsilon}$ shrinks the vertical
trajectories by the factor $\epsilon$. As before, $(T_{\epsilon})^{*}(\mathcal{L})$ is the pull back of the Liouville current $\mathcal{L}$ and the convergence is in the weak* topology.
\end{theorem}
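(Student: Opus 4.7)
The plan is to mirror the proof of Theorem \ref{thm:strip}, with the geometry of the chimney $C$ dictating where the modulus of the relevant curve families diverges. I reduce weak-$*$ convergence of $\epsilon^*(T_\epsilon)^*\mathcal{L}$ to asymptotics of conformal moduli via Lemma \ref{lem:mod_liouville_measure}. Let $\tilde T_\epsilon(x+iy)=x+i\epsilon y$, the conformally normalized form of $T_\epsilon$ on $C$, which happens to map $C$ onto itself. For a box $[a,b]\times[c,d]$ with $\nu$-null boundary, set $I=\phi^{-1}([a,b])$, $J=\phi^{-1}([c,d])$, and let $\Gamma^\epsilon_{I,J}$ be the family of curves in $C$ joining $\tilde T_\epsilon(I)$ and $\tilde T_\epsilon(J)$. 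Four prime ends of $C$ play a distinguished role: $\pm 1$, the top of the chimney $+i\infty$ (with $\phi(+i\infty)=i$), and the point at infinity of the lower half-plane (with $\phi(\infty)=-i$).

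Choose $I_0\subset\partial C$ a neighborhood of $+i\infty$ consisting of the upper portions of both walls $\{Re=\pm 1,\, Im>Y\}$, and $J_0^\pm\subset\partial C$ neighborhoods of $\pm 1$. Set $\epsilon^*:=(\m\Gamma^\epsilon_{I_0,J_0^-})^{-1}$. To show $\m\Gamma^\epsilon_{I_0,J_0^-}\to\infty$, I dualize: this modulus equals $(\m\Gamma^\epsilon_{A,B})^{-1}$, where $A$ is the segment of the left wall strictly between $I_0$ and $J_0^-$ and $B$ is the complementary arc in $\partial C\setminus(I_0\cup J_0^-)$. Under $\tilde T_\epsilon$ the arc $A$ contracts to a set of diameter $O(\epsilon)$ clustered at $-1$, while $\tilde T_\epsilon(B)$ stays a bounded distance from $\tilde T_\epsilon(A)$. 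Lemma \ref{lemma:mod-reldist} applied in $\mathbb{C}$ (via monotonicity $C\subset\mathbb{C}$) then gives $\m\Gamma^\epsilon_{A,B}\to 0$, so $\epsilon^*\to 0^+$. The reflection $z\mapsto -\bar z$ shows $\m\Gamma^\epsilon_{I_0,J_0^+}=\m\Gamma^\epsilon_{I_0,J_0^-}$, so the same rate governs both corners.

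For a general box the argument splits according to whether the unordered endpoint pairs $\{i,-1\}$ or $\{i,+1\}$ lie in $[a,b]\times[c,d]$. If neither pair does, I show $\m\Gamma^\epsilon_{I,J}$ stays uniformly bounded: the relative distance of $\tilde T_\epsilon(I)$ and $\tilde T_\epsilon(J)$ in $\mathbb{C}$ is bounded below as $\epsilon\to 0^+$, so Lemma \ref{lemma:mod-reldist} applies, and hence $\epsilon^*\m\Gamma^\epsilon_{I,J}\to 0$. If exactly one pair, say $\{i,-1\}$, is in the box, the subadditivity sandwich
$$\m\Gamma^\epsilon_{I\cap I_0,\,J\cap J_0^-}\leq \m\Gamma^\epsilon_{I,J}\leq \m\Gamma^\epsilon_{I\cap I_0,\,J\cap J_0^-}+\m\Gamma^\epsilon_{I\setminus I_0,\,J\cap J_0^-}+\m\Gamma^\epsilon_{I,\,J\setminus J_0^-},$$
combined with boundedness of the last two terms by the preceding case and an analogous sandwich for $\m\Gamma^\epsilon_{I_0,J_0^-}$, gives $\m\Gamma^\epsilon_{I,J}/\m\Gamma^\epsilon_{I_0,J_0^-}\to 1$, exactly as in Case 2 of the strip proof. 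Boxes containing both endpoint pairs simply add the two contributions.

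\textbf{Main obstacle.} The principal novelty over the strip is the $3\pi/2$ interior corners of $C$ at $\pm 1$, together with the branching of $\partial C$ through $-i$. In the no-pair case $\tilde T_\epsilon(I)$ may contain portions of a wall that collapse onto a corner point, and one must ensure that the resulting short curves near the corner contribute only bounded modulus. I expect to control this via the local conformal chart $w=(z\mp 1)^{2/3}$ straightening each corner to a half-plane, together with the fact that $\tilde T_\epsilon$ is affine and fixes the real-axis part of $\partial C$ pointwise, so its interaction with the corner geometry leaves the relevant relative distances of the deformed intervals uniformly bounded below --- this is the step that has no direct analogue in the strip argument.
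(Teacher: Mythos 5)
Your overall strategy (reduce weak-$*$ convergence to modulus asymptotics via Lemma \ref{lem:mod_liouville_measure}, single out the prime ends $i,\pm 1$, normalize by a reference family near a corner, and run the subadditivity sandwich) is the same as the paper's. The cases giving limit $0$ and limit $1$ go through essentially as you describe. But there is a genuine gap at the hardest point, and your stated ``main obstacle'' is not where the difficulty actually lies.

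The gap is the case $i\in\phi(I)$ and $\{-1,1\}\subset\phi(J)$, where the limit must be $2$. You dispose of it with ``boxes containing both endpoint pairs simply add the two contributions.'' Subadditivity gives the upper bound $\le 2$, but the matching lower bound is \emph{not} automatic: modulus is subadditive, not superadditive, and the curves running from the top of the chimney down to a neighborhood of $-1$ and those running to a neighborhood of $+1$ both traverse the chimney $\{|Re(z)|<1,\ Im(z)>0\}$, so their families are not supported in disjoint regions and their moduli need not add. Monotonicity alone only yields $\ge \max$, i.e.\ a lower bound of $1$. The paper closes this by replacing $I,J$ with the reflection-symmetric $\tilde I=(-1+i,-1+i\infty)\cup(1+i,1+i\infty)$, $\tilde J=(-2,-1)\cup(1,2)$, invoking the symmetry rule for modulus to get $\m\G^{\eps}_{\tilde I,\tilde J}=2\,\m\G^{\eps,+}_{I_0,J_0}$, and then showing that the subfamily of curves meeting the imaginary axis has modulus bounded in $\eps$, so that $\m\G^{\eps,+}_{I_0,J_0}/\m\G^{\eps}_{I_0,J_0}\to 1$. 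Some argument of this kind (or another proof of asymptotic superadditivity here) is indispensable and is missing from your proposal.

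Two smaller points. First, your blanket claim in the ``no-pair'' case that $\D(\tilde T_\eps(I),\tilde T_\eps(J))$ is bounded below fails when both images are unbounded continua (e.g.\ $i\in\phi(I)$ and $-i\in\phi(J)$, so $I$ contains an upper wall and $J$ contains a ray of $\mathbb{R}$): then $\min\{\diam\}=\infty$, the relative distance is $0$, and Lemma \ref{lemma:mod-reldist} gives nothing. The paper handles these configurations by overflowing the vertical curve family of a fixed rectangle such as $[-1,1]\times[0,1]$, which bounds the modulus by $2$; you need this extra tool. Second, the $3\pi/2$ corners at $\pm 1$ are a non-issue: modulus is conformally invariant and all the estimates are made with relative distances and overflowing in the flat coordinates, so the local chart $w=(z\mp 1)^{2/3}$ is unnecessary. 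The real novelty over the strip is the symmetry/superadditivity step above, not the corner geometry.
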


To prove this theorem we reformulate it in terms of the limiting values of
moduli of families of curves in $C$. Just like in the case of the strip, for
a pair of intervals of prime ends $I,J\subset\partial C$ we denote by
$\G_{I,J}$ the family of curves connecting $I$ and $J$ in the domain $C$ and
let $\G_{I,J}^{\eps} = T_{\eps}(\G_{I,J})$, where $T_{\eps}(x,y) = (x, \eps
y)$. Let us denote
\begin{align*}
I_0&= (1+i,1+i\cdot\infty) = \{(1,iy) : 1<y<\infty\} ,\\
J_0&= (1,2).
\end{align*}
Then we define $\epsilon^{*}=1/\m \G_{I_0,J_0}^{\eps}$ and we need to prove that $\epsilon^{*}\to 0$ as $\epsilon\to 0$.

\begin{theorem}
If $I,J \subset \partial C$ are disjoint intervals of prime ends s.t.
$\m\G_{I,J}<\infty$  then
\begin{equation}\label{chimney}
  \lim_{\eps\to0}\frac{\m \G_{I,J}^{\eps}}{\m \G_{I_0,J_0}^{\eps}} =
  \begin{cases}
    0, & \mbox{\rm{ if }} i\notin \phi(I),\\
    \#(\{-1,1\}\cap\phi(J)) & \mbox{\rm{ if }} i\in \phi(I).\\
    %0, & \mbox{\rm{ if }} i\in \phi(I)\mbox{ and } \#(\{-1,1\}\cap\phi(J))=0,\\
%    1, & \mbox{\rm{ if }} i\in \phi(I)\mbox{ and } \#(\{-1,1\}\cap\phi(J))=1,\\
%%    1, & \mbox{\rm{ if }} i\in \phi(I)\mbox{ and } \{-1,1\}\cap\phi(J)=\{1\},\\
%    2, & \mbox{\rm{ if }} i\in \phi(I)\mbox{ and } \#(\{-1,1\}\cap\phi(J))=2.
%    %1, & \mbox{\rm{ in Cases 2(a) and 2(b)}},\\
  \end{cases}
\end{equation}
\end{theorem}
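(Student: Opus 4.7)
My plan parallels the case-by-case analysis used for the horizontal strip, with two new features of the chimney driving the statement: the denominator $\m\G^{\eps}_{I_0,J_0}$ is controlled by the $3\pi/2$-corner of $\partial C$ at $z=1$ (not by an ``infinite corner''), and the symmetric corner at $z=-1$ can contribute a second, independent logarithmic divergence, producing the factor $\#(\{-1,1\}\cap\phi(J))$. Throughout I assume, using the symmetry of the modulus in its arguments, that $i\in\phi(I)$ whenever $i\in\phi(I)\cup\phi(J)$. The first task is to establish $\m\G^{\eps}_{I_0,J_0}=\tfrac{2}{3\pi}\log(1/\eps)+O(1)$, so that in particular $\eps^{*}:=1/\m\G^{\eps}_{I_0,J_0}\to 0^{+}$. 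For this I would apply the corner uniformization $w=(z-1)^{2/3}$, which sends the $3\pi/2$-wedge at $z=1$ to the upper half $w$-plane and turns the right chimney wall and the real-axis segment $(1,2)$ into the two halves of the real $w$-axis. Under this map, $T_{\eps}(I_0)$ and $T_{\eps}(J_0)$ become intervals on the real $w$-axis separated by a gap of size comparable to $\eps^{2/3}$, the resulting four boundary points have cross-ratio $\sim\eps^{-2/3}$, and Lemma \ref{lem:mod_liouville_measure} yields the stated asymptotic.

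\emph{Ratio tends to $0$.} This covers both $i\notin\phi(I)$ and $i\in\phi(I)$ with $n:=\#(\{-1,1\}\cap\phi(J))=0$. In each case $J$ is bounded away from both corners of $\partial C$, and a finite sub-case analysis (based on which quarter-arcs of $S^{1}\setminus\{-1,1,i\}$ contain $\phi(I)$ and $\phi(J)$) reduces the problem to checking, via the local corner uniformizations $w=(z\mp 1)^{2/3}$, that the cross-ratio of the four boundary images is homogeneous in $\eps^{2/3}$ and therefore has a finite positive limit as $\eps\to 0$. Lemma \ref{lem:mod_liouville_measure} then gives $\m\G^{\eps}_{I,J}=O(1)$, so the ratio tends to $0$.

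\emph{Ratio tends to $n\in\{1,2\}$: upper bound and the $n=1$ lower bound.} For $n=1$, say $1\in\phi(J)$, split $J$ at the corner $z=1$ into its chimney-wall part $J_{\mathrm{wall}}$ and its real-axis part $J_{\mathrm{axis}}$. Subadditivity gives $\m\G^{\eps}_{I,J}\le\m\G^{\eps}_{I,J_{\mathrm{wall}}}+\m\G^{\eps}_{I,J_{\mathrm{axis}}}$, the first summand being bounded (by the previous step, since both arcs lie on the same chimney wall after the uniformization) and the second having the same leading asymptotic $\tfrac{2}{3\pi}\log(1/\eps)$ as $\m\G^{\eps}_{I_0,J_0}$. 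The matching lower bound follows from monotonicity $\m\G^{\eps}_{I,J}\ge\m\G^{\eps}_{I\cap I_0,J\cap J_0}$ together with the fact that truncating $I_0,J_0$ to non-trivial sub-intervals only perturbs lower-order terms in the cross-ratio. For $n=2$, the upper bound $\m\G^{\eps}_{I,J}\le(2+o(1))\m\G^{\eps}_{I_0,J_0}$ is obtained by further splitting $J$ into its two corner pieces and applying the $n=1$ estimate to each summand.

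\emph{Main obstacle: the $n=2$ lower bound.} Producing the factor $2$ requires genuinely combining the two corner contributions, and this is the main technical obstacle. Using the reflection symmetry $z\mapsto-\bar z$ of $C$, define the analogous left-corner canonical pair $(\tilde I_0,\tilde J_0)$, for which $\m\G^{\eps}_{\tilde I_0,\tilde J_0}=\m\G^{\eps}_{I_0,J_0}$. Consider two sub-families of $\G^{\eps}_{I,J}$: curves from $I\cap I_0$ to $J\cap J_0$ confined to $C\cap\{\mathrm{Re}\,z>0\}$, and curves from $I\cap\tilde I_0$ to $J\cap\tilde J_0$ confined to $C\cap\{\mathrm{Re}\,z<0\}$. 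These live in disjoint Borel subsets of $C$, so a standard additivity argument (any admissible metric for the union restricts to admissible metrics for the two pieces with disjointly supported energies) gives that the modulus of the union equals the sum of the two moduli. It then remains to verify that confining to one half of $C$ does not change the leading asymptotic $\tfrac{2}{3\pi}\log(1/\eps)$---this holds because the extremal short curves near a $3\pi/2$-corner are concentrated in shrinking neighborhoods of the corner, which are automatically contained in the corresponding half of $C$. This last verification is the most delicate step of the proof; combined with the additivity it yields $\m\G^{\eps}_{I,J}\ge(2-o(1))\m\G^{\eps}_{I_0,J_0}$, matching the upper bound.
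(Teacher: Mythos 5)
Your overall strategy is the same as the paper's: show $\m\G^{\eps}_{I_0,J_0}\to\infty$ using the corner at $z=1$, dispose of every configuration that does not pair the chimney top with a corner of $\partial C$ by showing the corresponding moduli stay bounded, sandwich the $n=1$ case between $\m\G^{\eps}_{I\cap I_0,J\cap J_0}$ and that quantity plus bounded error terms, and obtain the factor $2$ by splitting along the imaginary axis and arguing that curves crossing it are negligible. Two of your choices differ from the paper in ways worth recording. First, you extract the asymptotics $\tfrac{2}{3\pi}\log\tfrac{1}{\eps}$ from the corner exponent $2/3$ together with Lemma \ref{lem:mod_liouville_measure}; this is correct provided you phrase it through the boundary behavior $\phi(z)-1\asymp (z-1)^{2/3}$ of the actual Riemann map ($w=(z-1)^{2/3}$ by itself does not uniformize $C$, so one cannot literally apply the lemma ``after this map''), but the paper never needs the constant: divergence already follows from the annulus bound $\m\G^{\eps}_{I',J'}\le 2\pi/\log(1/\eps)$ for the reciprocal family. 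Second, for the $n=2$ lower bound the paper invokes the symmetry rule to get $\m\G^{\eps}_{\tilde I,\tilde J}=2\,\m\G^{\eps,+}_{I_0,J_0}$, whereas you use additivity of modulus over disjointly supported subfamilies; these are equivalent, and both reduce to the one estimate you only assert heuristically, namely that the subfamily of curves meeting $\{\mathrm{Re}\,z=0\}$ has modulus bounded uniformly in $\eps$. The paper proves this concretely: such a curve either starts on $[1,1+i]$ and must reach the imaginary axis (a family of finite modulus by the relative-distance bound), or starts above height $1$ in the chimney and must descend through the rectangle $[-1,1]\times[0,1]$, so it overflows families of bounded modulus; you should replace ``extremal short curves are concentrated near the corner'' with this overflow argument. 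Finally, the sentence ``in each case $J$ is bounded away from both corners'' in your ratio-tends-to-zero step is false: when $i\notin\phi(I)$ nothing prevents $1\in\phi(J)$, and the sub-cases where $J$ contains a corner while $I$ is a bounded interval on a chimney wall are precisely those in which three of the four boundary images collapse at the common rate $\eps^{2/3}$. These are handled by the cross-ratio homogeneity you describe (or, as in the paper's cases (a4) and (c), by checking that $\D(T_{\eps}(I),T_{\eps}(J))$ stays bounded below), not by separation from the corners, so that sentence should be deleted rather than relied upon.
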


\begin{proof}
  First note that
  \begin{align}\label{modlim:standard}
\lim_{\eps\to0}\m\G_{I_0,J_0}^{\eps}=\infty.
  \end{align}
Indeed, letting $I'=[1,1+i)$ and $J'=\partial{C}\setminus(I_0\cup J_0 \cup
I')$ we obtain $\m\G^{\eps}_{I_0,J_0} = (\m\G^{\eps}_{I',J'})^{-1}.$ Since $\G^{\eps}_{I',J'}$ overflows the family of curves connecting the
boundary components of the annulus $A(1;\diam(T_{\eps}(I')), 1)$ centered at
$z=1$ with radii $\diam(T_{\eps}(I'))=\eps$ and $1$, we have
$$\m\G^{\eps}_{I',J'}\leq \frac{2\pi}{\log(\frac{1}{\eps})}\to0, \mbox{ as } \eps\to0,$$
and therefore $\m\G^{\eps}_{I_0,J_0}\to\infty$ as $\eps\to0$.

Now, let $C_{+},C_-$ denote the connected components of $\partial C$ in the
right and left half-planes, respectively. Furthermore, for an interval of
prime ends $I\subset\partial{C}$ we let $I_{\pm}=I\cap C_{\pm}$. Therefore
\begin{equation}\label{mod-spliting}
  \m\G_{I,J}^{\eps} \leq \m\G_{I_+,J_+}^{\eps}+\m\G_{I_-,J_-}^{\eps}+\m\G_{I_+,J_-}^{\eps}+\m\G_{I_-,J_+}^{\eps}.
\end{equation}
Below we will prove the following lemma.
\begin{lemma}\label{lemma:bdd-distinct}
  If $\m\G_{I,J}<\infty$ then $\m\G_{I_-,J_+}^{\eps}$ and
  $\m\G_{I_+,J_-}^{\eps}$ are bounded in $\eps$.
\end{lemma}

Therefore, Lemma \ref{lemma:bdd-distinct} and (\ref{modlim:standard}) imply
\begin{align}\label{modlim:chimney-same}
\lim_{\eps\to0}\frac{\m \G_{I,J}^{\eps}}{\m \G_{I_0,J_0}^{\eps}}\leq
\lim_{\eps\to0}\frac{\m \G_{I_+,J_+}^{\eps}}{\m \G_{I_0,J_0}^{\eps}}+
\lim_{\eps\to0}\frac{\m \G_{I_-,J_-}^{\eps}}{\m \G_{I_0,J_0}^{\eps}},
\end{align}
and we need estimates on $\m \G_{I_+,J_+}^{\eps}$ and $\m
\G_{I_-,J_-}^{\eps}$.  This is done in the next lemma.

\begin{lemma}\label{lemma:bdd-same} If $\m\G_{I,J}<\infty$ then
\begin{itemize}
  \item[(a)] If $i\notin\phi(I)\cup\phi(J)$ then $\m\G_{I_-,J_-}^{\eps}$
      and $\m\G_{I_+,J_+}^{\eps}$ are bounded in $\eps$.
  \item[(b)] If $i\in\phi(I)$ and $1\notin \bar{J_+}$ (resp. $-1\notin
      \bar{J_-}$) then $\m\G_{I_+,J_+}^{\eps}$ (resp.
      $\m\G_{I_-,J_-}^{\eps}$) is bounded in $\eps$.
%  \item[(c)] If $i\in\phi(I)$ and $-1\notin \bar{J_-}$ then
%      $\m\G_{I_-,J_-}^{\eps}$ is bounded in $\eps$.
  \item[(c)] If $i\in\phi(I)$ and $1\in J_+$ (resp. $-1\in J_-$) then
      \label{lemma:same}
\begin{equation}\label{limit:chimney}
\lim_{\eps\to0}\frac{\m\G_{I_+,J_+}^{\eps}}{\m\G_{I_0,J_0}^{\eps}}=1\qquad  \left(\mbox{ \textrm{resp.} }
\lim_{\eps\to0}\frac{\m\G_{I_-,J_-}^{\eps}}{\m\G_{I_0,J_0}^{\eps}}=1.\right)
  \end{equation}
\end{itemize}
\end{lemma}

%\begin{lemma}\label{lemma:same}
% If $i\in\phi(I)$, $1\in J_+$ and $-1\in J_-$ then
%  \begin{equation}\label{limit:chimney}
%\lim_{\eps\to0}\frac{\m\G_{I_+,J_+}^{\eps}}{\m\G_{I_0,J_0}^{\eps}}=1 \mbox{ and } \lim_{\eps\to0}\frac{\m\G_{I_-,J_-}^{\eps}}{\m\G_{I_0,J_0}^{\eps}}=1.
%  \end{equation}
%\end{lemma}

Let us prove the theorem assuming Lemma \ref{lemma:bdd-same}.

 $-$ If $i\notin\phi(I)$ then (\ref{modlim:chimney-same}) and Lemma \ref{lemma:bdd-same}.(a)
 imply that  limit in (\ref{chimney}) is $0$.

 $-$ If $i\in\phi(I)$ and $\#(\{-1,1\}\cap\phi(J))=0$ then by
(\ref{modlim:chimney-same}) and Lemma \ref{lemma:bdd-same}.(b) the limit in
(\ref{chimney}) is $0$.
%
%
%(\ref{mod-spliting}), by Lemma \ref{lemma:bdd-distinct} and by parts $(b)$
%and $(c)$ of Lemma \ref{lemma:bdd-same} we have that the numerator in
%(\ref{chimney}) is bounded in this case as well.

$-$ If $i\in \phi(I)\mbox{ and } \#(\{-1,1\}\cap\phi(J))=1$, we may assume
without loss of generality, that $1\in\phi(J) \mbox{ but } -1\notin\phi(J)$.
Then, by (\ref{modlim:chimney-same}) and Lemma \ref{lemma:bdd-same}.(b),(c)
we have
\begin{eqnarray*}
  \lim_{\eps\to0}\frac{\m \G_{I,J}^{\eps}}{\m \G_{I_0,J_0}^{\eps}}
  \leq 1+0.
\end{eqnarray*}
Since also $\m\G_{I,J}^{\eps} \geq \m \G_{I_+,J_+}^{\eps}$ we obtain that the
limit in (\ref{chimney}) is $1$.

$-$ If $\#(\{-1,1\}\cap\phi(J))=2$ then we have that in this case
\begin{align}\label{chimney:n=2}
i\in \phi(I)\mbox{ and } \{-1,1\}\subset\phi(J).
\end{align}
By (\ref{modlim:chimney-same}) and Lemma \ref{lemma:bdd-same}.(c) we
have
\begin{eqnarray*}
  \lim_{\eps\to0}\frac{\m \G_{I,J}^{\eps}}{\m \G_{I_0,J_0}^{\eps}}
  \leq 1+1=2.
\end{eqnarray*}
The rest of the proof is devoted to proving the opposite inequality. Note
that in the previous case this easily followed from the monotonicity of the
modulus. To estimate $\m \G_{I,J}^{\eps}$ from below, we will compare it to
$\m \G_{\tilde{I},\tilde{J}}^{\eps}$, where
\begin{eqnarray*}
\tilde{I}&:=&(-1+i,-1+i\infty)\cup(1+i,1+i\infty)\\
\tilde{J}&:=&(-2,-1)\cup(1,2).
\end{eqnarray*}

We will show that if $I$ and $J$ satisfy conditions (\ref{chimney:n=2}), then
the following equalities hold:
\begin{align}
\lim_{\eps\to0}\frac{\m \G_{I,J}^{\eps}}{\m
\G_{\tilde{I},\tilde{J}}^{\eps}}&=1, \label{modlim:chimney}\\
\lim_{\eps\to0}\frac{\m \G_{\tilde{I},\tilde{J}}^{\eps}}{\m
\G_{{I_0},{J_0}}^{\eps}}&=2. \label{modlim:chimney3}
\end{align}
This will be sufficient for the proof of the theorem in this case, since
multiplying (\ref{modlim:chimney}) and (\ref{modlim:chimney3}) clearly yields
equality (\ref{chimney}) in this case.

\begin{proof}[Proof of equality (\ref{modlim:chimney})]
By monotonicity and subadditivity of the modulus, we have
\begin{align}
 \begin{split}
 \m\G^{\eps}_{I\cap\tilde{I},J\cap\tilde{J}} \leq \m \G_{I,J}^{\eps}&
\leq \m\G^{\eps}_{I\cap\tilde{I},J\cap\tilde{J}}
+\m\G^{\eps}_{I\cap\tilde{I},J\setminus\tilde{J}}\\
&
+ \m\G^{\eps}_{I\setminus\tilde{I},J\cap\tilde{J}}+\m\G^{\eps}_{I\setminus\tilde{I},J\setminus\tilde{J}}. \label{ineq:case3}
\end{split}\\
\begin{split}
 \m\G^{\eps}_{I\cap\tilde{I},J\cap\tilde{J}} \leq \m \G_{\tilde{I},\tilde{J}}^{\eps}&
\leq \m\G^{\eps}_{I\cap\tilde{I},J\cap\tilde{J}}
+\m\G^{\eps}_{I\cap\tilde{I},\tilde{J}\setminus J} \\ &
+ \m\G^{\eps}_{I\setminus\tilde{I},J\cap\tilde{J}}+\m\G^{\eps}_{\tilde{I}\setminus I,\tilde{J}\setminus J}.\label{ineq:case3-1}
\end{split}
\end{align}
It is enough to show that all the terms on the right hand sides of
(\ref{ineq:case3}) and (\ref{ineq:case3-1}) are bounded, except
$\m\G^{\eps}_{I\cap\tilde{I},J\cap\tilde{J}}$. Indeed, if this is the case
then, since $\m \G_{I\cap \tilde{I},J\cap\tilde{J}}^{\eps}\to\infty$, we will
have
$$\lim_{\eps\to0}\frac{\m \G_{I,J}^{\eps}}{\m \G_{I\cap \tilde{I},J\cap\tilde{J}}^{\eps}}=1,
\quad\mbox{ and }\quad
\lim_{\eps\to0}\left(\frac{\m \G_{\tilde{I},\tilde{J}}^{\eps}}{\m \G_{I\cap \tilde{I},J\cap\tilde{J}}^{\eps}}\right)^{-1}=1.$$
Thus, multiplying the last two equations gives (\ref{modlim:chimney}).

Now we show the boundedness of the mentioned moduli appearing in
(\ref{ineq:case3}). The case of (\ref{ineq:case3-1}) is done in exactly the
same way.

Since $i\in\phi(I)\cap\phi(\tilde{I})$ we have that $I\setminus\tilde{I}$ is
a union of two (possibly empty) bounded segments. Therefore, subadditivity
and part $(a)$ of Lemma \ref{lemma:bdd-same} implies that
$\m\G^{\eps}_{I\setminus\tilde{I},J\cap\tilde{J}}$
 and $\m\G^{\eps}_{I\setminus\tilde{I},J\setminus\tilde{J}}$ are both
bounded.

To estimate $\m\G^{\eps}_{I\cap\tilde{I},J\setminus\tilde{J}}$ note, that
since $\{-1,1\}\subset\phi(J)$ it follows that $-i\in\phi(J)$ and therefore
 $$J\setminus \tilde{J}=(-\infty,-2]\cup J_1 \cup J_2 \cup[2,\infty),$$
where $J_1$ and $J_2$ are compact intervals in the vertical lines
$\{Re(z)=\pm1\}$, respectively. Therefore,
$$\m\G^{\eps}_{I\cap\tilde{I},J\setminus\tilde{J}}\leq
\m\G_{I\cap \tilde{I}, [2,\infty)}^{\eps}+ \m\G_{I\cap \tilde{I},
(-\infty,-2]}^{\eps} + \m\G_{I\cap \tilde{I}, J_1\cup J_2}^{\eps}.$$ Now,
$\m\G_{I\cap \tilde{I}, [2,\infty)}^{\eps}$ and $\m\G_{I\cap \tilde{I},
(-\infty,-2]}^{\eps}$ are bounded by Lemma \ref{lemma:bdd-distinct} and part
$(b)$ of Lemma \ref{lemma:bdd-same}. Moreover,  $\m\G_{I\cap \tilde{I},
J_1}^{\eps}$ and $\m\G_{I\cap \tilde{I}, J_2}^{\eps}$ are also both bounded,
since the relative distance between, say, $T_{\eps}(I\cap \tilde{I})$ and
$T_{\eps}(J_1)$ remains bounded away from $0$ as $\eps\to0$. It follows that
$\m\G_{I\cap \tilde{I}, J_1\cup J_2}^{\eps}$ is bounded and therefore
$\m\G^{\eps}_{I\cap\tilde{I},J\setminus\tilde{J}}$ is bounded as
well.
\end{proof}

%\begin{align}\label{modlim:chimney}
%\lim_{\eps\to0}\frac{\m \G_{I,J}^{\eps}}{\m
%\G_{\tilde{I},\tilde{J}}^{\eps}}=1.
%\end{align}

\begin{proof}[Proof of equality (\ref{modlim:chimney3})]

We first compare $\m \G_{\tilde{I},\tilde{J}}^{\eps}$ and
$\m\G_{I_0,J_0}^{\eps}$. For this let
\begin{align*}
\G_{I_0,J_0}^{\eps,+}&:= \{\g\in\G_{I_0,J_0}^{\eps} : \g\subset \{ Re(z) > 0\} \},\\
\G_{I_0,J_0}^{\eps,0}&:= \{\g\in\G_{I_0,J_0}^{\eps} : \g\cap \{ Re(z) = 0\} \neq\emptyset \}.
\end{align*}
Note that, since $C$ and $\G_{\tilde{I},\tilde{J}}^{\eps}$ are both
symmetric with respect to the imaginary axis, the symmetry rule for modulus
(see \cite{GM}, page 137) implies that
\begin{align}\label{modeq:symmetry}
{\m\G_{\tilde{I},\tilde{J}}^{\eps}}=2 \,\m \G_{I_0,J_0}^{\eps,+}.
\end{align}

Moreover, by monotonicity and subadditivity of modulus we have
\begin{align}\label{modineq:symmetry}
\m\G_{I_0,J_0,}^{\eps,+}\leq \m\G_{I_0,J_0}^{\eps}\leq \m
\G_{I_0,J_0}^{\eps,+} + \m \G_{I_0,J_0}^{\eps,0}.
\end{align}
%
%where $(\G_{I_0,J_0}^0)^{\eps}$ is the family of curves
%$\g\in\G_{I_0,J_0}^{\eps}$ such that $\g\cap\{Re(z)=0\}\neq\emptyset$.
Since $T_{\eps}(I_0)\subset[1,1+i\infty)$ we have that
\begin{align}\label{modineq:cornertouching}
\m\G_{I_0,J_0}^{\eps,0}\leq \m \G_{[1,1+i\infty),[1,2]}^0,
\end{align}
where the latter is the family of curves connecting $[1,1+i\infty)$ to
$[1,2]$ in $C$ which also intersect the imaginary axis $\{x=0\}$.

It is easy to see that $\m \G_{[1,1+i\infty),[1,2]}^0<\infty$. Indeed,
letting $\G_1$ and $\G_2$ be the subfamilies of curves in
$\G_{[1,1+i\infty),[1,2]}^0$ starting in $[1,1+i]$ and $[1+i,1+i\infty)$,
respectively, we have
$$\m \G_{[1,1+i\infty),[1,2]}^0 \leq \m \G_1 + \m \G_2.$$
Now, note that $\m \G_1< \infty$, since $\G_1$ overflows the family of curves
connecting $[1,1+i]$ to $\{Re(z)=0\}$, which has finite modulus (relative
distance between $[1,1+i]$ and $\{Re(z)=0\}$ is $1>0$).
Moreover, $\m \G_2\leq 1$, since $\G_2$ overflows the family of curves
connecting the horizontal sides in the unit square $[0,1]\times[0,1]$.

Thus, $\m \G_{[1,1+i\infty),[1,2]}^0<\infty,$ and by
(\ref{modineq:cornertouching}) we also have that $\m \G_{I_0,J_0}^{\eps,0}$
is bounded independently of $\eps$.

Since $\m\G^{\eps}_{I_0,J_0}\to\infty$ and $\m \G_{I_0,J_0}^{\eps,0}$ is
bounded, it follows from (\ref{modineq:symmetry}) that
\begin{equation}\label{modlim:halving}
  \lim_{\eps\to0}\frac{\m \G_{I_0,J_0}^{\eps,+}}{\m\G_{I_0,J_0}^{\eps}} = 1.
\end{equation}
Finally, combining (\ref{modeq:symmetry}) and (\ref{modlim:halving}) we
conclude, that
\begin{equation}
 \lim_{\eps\to0}\frac{\m \G_{\tilde{I},\tilde{J}}^{\eps}}{\m \G_{{I}_0,{J}_0}^{\eps}}
 = \lim_{\eps\to0}\frac{\m \G_{\tilde{I},\tilde{J}}^{\eps}}{\m \G_{I_0,J_0}^{\eps,+}}
 \cdot \frac{\m \G_{I_0,J_0}^{\eps,+}}{\m\G_{{I}_0,{J}_0}^{\eps}} = 2 \cdot 1 =2,
\end{equation}
which proves (\ref{modlim:chimney3}).
\end{proof}

Thus, to complete the proof of the theorem we only need to prove Lemmas
\ref{lemma:bdd-distinct} and \ref{lemma:bdd-same}.
\end{proof}

\begin{proof}[Proof of Lemma \ref{lemma:bdd-distinct}] We will show the boundedness of $\m\G_{I_-,J_+}^{\eps}$. The
case of  $\m\G_{I_+,J_-}^{\eps}$ is done the same way.

There are two cases to consider:

\textsc{Case 1}. Suppose $\min(\diam I_-,\diam J_+)<\infty$. For concreteness
we may assume $\diam I_- <\infty$.  This means that there is a real number $1
< a <\infty$ such that for $\eps>0$ small enough we have
$$T_{\eps}(I_-)\subset(-a,-1]\cup[-1,-1+i).$$
Therefore, since
$$\D(T_{\eps}(I_-),T_{\eps}(J_+)) \geq \frac{2}{\diam (T_{\eps}(I_-))} \geq \frac{2}{\sqrt{1+a^2}},$$
by Lemma \ref{lemma:mod-reldist} we have
\begin{eqnarray*}
  \m\G_{I_-,J_+}^{\eps} = \m\G_{T_{\eps}(I_-),T_{\eps}(J_+)} \leq \m (T_{\eps}(I_-),T_{\eps}(J_+),\mathbb{C}) <\infty.
\end{eqnarray*}

\textsc{Case 2}. Suppose $\diam I_- = \diam J_+ =\infty$. Since
$\m\G_{I_-,J_+}\leq\m\G_{I,J}<\infty$ we have that
\begin{align*}
&\min\{\diam(I_-\cap(-\infty,-1]), \diam(J_+ \cap [1,\infty))\}<\infty,\\
&\min\{\diam(I_-\cap \{Re(z)=-1\}), \diam(J_+ \cap \{Re(z)=1\})\}<\infty.
\end{align*}
For concreteness we may assume then, that
\begin{align*}
\diam(I_-\cap(-\infty,-1])<\infty, \mbox{ and }
\diam(J_+ \cap \{Re(z)=1\})\}<\infty.
\end{align*}
Therefore there is a real number $1 < a <\infty$ such that for $\eps>0$ small
enough we have
$$T_{\eps}(I_-)\subset(-a,-1]\cup[-1,-1+i\infty) \mbox{ and }
T_{\eps}(J_+)\subset[1,\infty]\cup[1,1+i).$$
Therefore,
\begin{align}\label{modest:unbounded}
\begin{split}
  \m\G_{I_-,J_+}^{\eps}
  &\leq
  \m\G_{[-1,-1+i\infty),[1,\infty)} \\
  & +
  \m\G_{[-1,-1+i\infty),[1,1+i)}+
  \m\G_{(-a,-1],[1,\infty)} +
  \m\G_{(-a,-1],[1,1+i)},
\end{split}
\end{align}
where the last three terms are bounded by Case $1$ above. On the other hand,
$$\m\G_{[-1,-1+i\infty),[1,\infty)}\leq \m\G_{[-1,-1+i),[1,\infty)}+
\m\G_{[-1+i,-1+i\infty),[1,\infty)}.$$
Since $\D([-1,-1+i),[1,\infty))=2$ we have that the first term above is
bounded. Moreover,
$$\m\G_{[-1+i,-1+i\infty),[1,\infty)}<2,$$ since
$\G_{[-1,-1+i\infty),[1,\infty)}$ overflows the ``vertical family" of the
rectangle $[-1,1]\times[0,1]$. Therefore
$$\m\G_{[-1,-1+i\infty),[1,\infty)}<\infty,$$ and by inequality (\ref{modest:unbounded}) we have
that $\m\G^{\eps}_{I_-,J_+}$ is bounded.
% Note that since the last two terms are bounded.
%
%
%$$\D((-a,-1]\cup[-1,-1+i\infty),[1,\infty]\cup[1,1+i)) = \frac{2}{}.$$
%%
%
%
%\textbf{Case 3}. $|I_-\cap (-1,-1+i\infty)|=\infty$ and $|I_-\cap
%(-\infty,-1]|=\infty$. Since $\m\G_{I_-,J_+}<\infty$, it follows that
%$|J_+\cap (1,1+i\infty)|<\infty$ and $|J_+\cap (1,\infty)|<\infty$. Therefore
%there is a real number $1 < a <\infty$ such that for $\eps>0$ small enough we
%have
%$$T_{\eps}(J_+)\subset[1,a]\cup[1,1+i),$$
%and $\m\G_{I_-,J_+}^{\eps} \leq \m\G_{C_-,[1,a)\cup[1,1+i)}<\infty.$
\end{proof}

\begin{proof}[Proof of Lemma \ref{lemma:bdd-same}]
We will estimate only $\m\G_{I_+,J_+}$. The estimates for $\m\G_{I_-,J_-}$
are done in a very similar way.

\textsc{Case} ${(a)}:$ We first assume that $1\notin I_+\cup J_+$. Then we
have the following
subcases:\\
$(a1)$ If $I_+$ and $J_+$ belong to the same component of
      $C_+\setminus \{1\}$ then
      $$\D({T_{\eps}(I_+),T_{\eps}(J_+)}) = \D (I_+,J_+)>0.$$
Therefore $\m\G_{I_+,J_+}^{\eps}\leq \m
      ({T_{\eps}(I_+),T_{\eps}(J_+)},\mathbb{C}),$ which is bounded by
      Lemma \ref{lemma:mod-reldist}.\\
$(a2)$ If $I_+\Subset (1,1+i\cdot\infty)$ and $J_+\Subset(1,\infty)$
    then $T_{\eps}(J_+)=J_+$ while $T_{\eps}(I_+)$ is eventually contained
    in an interval $(1,1+\delta i)$ for every $\d>0$. Therefore
    $\dist(T_{\eps}(I_+),T_{\eps}(J_+))\to \dist(\{0\}, J_+) >0,$ and
$\diam T_{\eps}(I_+)\to0$ as $\eps\to 0.$ Thus,
$\D(T_{\eps}(I_+),T_{\eps}(J_+))\to\infty$ and $\m\G_{I_+,J_+}^{\eps}$ is
bounded by Lemma \ref{lemma:mod-reldist}.\\

If $1\in I_+ \cup J_+$ then there are two more cases (we are assuming that
$I_+$ is located to the left of $J_+$ when looking from inside $C$):\\
$(a3)$ If $1\in I_+$ while $J_+\subset(1,\infty)$ then
$\D(T_{\eps}(I_+),T_{\eps}(J_+))\to \D(I_+\cap\mathbb{R},J_+) >0,$ as
$\eps\to0$ and therefore $\m \G^{\eps}_{I_+,J_+}$ is bounded.\\
$(a4)$ If $1\in J_+$ then we may assume that there are reals $0<c<a<b<\infty$
and $d>0$ such that $I_+=(1+ia,1+ib)$ and $J_+=[1,1+ic)\cup[1,d)$. Then
\begin{align*}
  \D(T_{\eps}(I_+),T_{\eps}(J_+)) = \frac{\eps (a - c)}{\eps c} =\frac{a}{c}-1>0
\end{align*}
and $\m\G_{I_+,J_+}^{\eps}$ is bounded. The same arguments show that
$\m\G_{I_-,J_-}$ is also bounded in this case.\\

\textsc{Case} $(b):$ If $i\in\phi(I)$ and $1\notin J_+$ (we also assume
$1\notin\partial J_+$) then either $J_+\subset C_+\setminus\mathbb{R}$ or
$J_+\Subset(1,\infty)$. In the former case the proof follows the same lines
as in Case $(a1)$ above. Therefore we assume
$$ i\in\phi(I) \mbox{ and } J_+\Subset(1,\infty).$$
Since $\m\G_{I_+,J_+}^{\eps}\leq
      \m\G_{I_+\cap\mathbb{R},J_+}^{\eps}+
      \m\G_{I_+\setminus\mathbb{R},J_+}^{\eps}$
%\begin{align*}
%\m\G_{I_+,J_+}^{\eps}\leq
%\m\G_{I_+\cap\mathbb{R},J_+}^{\eps}+
%\m\G_{I_+\setminus\mathbb{R},J_+}^{\eps}
%      \end{align*}
and
$$\m\G_{I_+\cap\mathbb{R},J_+}^{\eps} =
\m\G_{I_+\cap\mathbb{R},J_+}\leq \m\G_{I,J}<\infty,$$ we only need to show
that  $\m\G_{I_+\setminus\mathbb{R},J_+}^{\eps}$ is bounded. By
subadditivity,
\begin{align*}
\m\G_{I_+\setminus\mathbb{R},J_+}^{\eps}
&\leq
  \m\G_{[1,1+i\infty),J_+}^{\eps} =\m\G_{[1,1+i\infty),J_+}\\
  &\leq \m\G_{[1+i,1+i\infty),J_+}+\m\G_{[1,1+i),J_+}.
\end{align*}
Since $\m\G_{[1+i,1+i\infty),J_+}\leq 2$ (because
  $\G_{[1+i,1+i\infty),J_+}$ overflows the ``vertical family" in the
  rectangle $[-1,1]\times[0,1]$), and $\m\G_{[1,1+i),J_+}<\infty$ since
  $\D([1,1+i),J_+)>0$ (note that $\dist([1,1+i), J_+)>0$), it follows that
  $\m\G_{I_+\setminus\mathbb{R},J_+}^{\eps}$ is bounded.\\
%
%\begin{itemize}
 % \item[(b1)] $J_+\Subset(1,\infty)$: Since $\m\G_{I_+,J_+}^{\eps}\leq
%      \m\G_{I_+\cap\mathbb{R},J_+}^{\eps}+
%      \m\G_{I_+\setminus\mathbb{R},J_+}^{\eps}$
%%\begin{align*}
%%\m\G_{I_+,J_+}^{\eps}\leq
%%\m\G_{I_+\cap\mathbb{R},J_+}^{\eps}+
%%\m\G_{I_+\setminus\mathbb{R},J_+}^{\eps}
%%      \end{align*}
%and
%$$\m\G_{I_+\cap\mathbb{R},J_+}^{\eps} =
%\m\G_{I_+\cap\mathbb{R},J_+}\leq \m\G_{I,J}<\infty,$$ we only need to show
%that  $\m\G_{I_+\setminus\mathbb{R},J_+}^{\eps}$ is bounded. But,
%\begin{align*}
%\m\G_{I_+\setminus\mathbb{R},J_+}^{\eps}
%&\leq
%  \m\G_{[1,1+i\infty),J_+}^{\eps} =\m\G_{[1,1+i\infty),J_+}\\
%  &\leq \m\G_{[1+i,1+i\infty),J_+}+\m\G_{[1,1+i),J_+}<\infty,
%\end{align*}
%since $\m\G_{[1+i,1+i\infty),J_+}\leq 2$ (because
%  $\G_{[1+i,1+i\infty),J_+}$ overflows the "vertical family" in the
%  rectangle $[-1,1]\times[0,1]$), while $\m\G_{[1,1+i),J_+}<\infty$ since
%  $\D([1,1+i),J_+)>0$ (note that $\dist([1,1+i), J_+)>0$).
%
%
%
  % Therefore, $\m\G_{I_+,J_+}^{\eps}$ is bounded as $\eps\to0$, since it is
%  bounded from above by $\m\G_{I_+\cap\mathbb{R},J_+}^{\eps}+
%  \m\G_{I_+\setminus\mathbb{R},J_+}^{\eps}$.
%
%  \item[(b2)]
%
%  (1+ic,1+id)$ with $0<c<d<\infty$, while $I_+=(1+ia,1+i\infty)$, with
%      $d<a$. By considering $F_{\eps}$ we see that
%      $$\m\G_{I_+,J_+}^{\eps} \leq \m\tilde{\G}_{T_{\eps}(I_+),T_{\eps}(J_+)} =
%      \m F_{\eps}(\tilde{\G}_{I_+,J_+})=\m\tilde{\G}_{I_+,J_+}<\infty,$$
%      since $|J_+|<\infty$ and $a>d$.
      %$$\m\G_{I_+,J}^{\eps} \leq \m\G_{I_+,(1,c)}^{\eps} +\m\G_{I_+,(1,1+id)}^{\eps},$$
%\end{itemize}
%
%(c). This is the same as (b).

\textsc{Case} $(c):$ If $I_+\Subset [1,1+i\infty)$ then just like in case
$(a4)$ above (with $b=\infty$), we have that $\m\G_{I_+,J_+\setminus
\mathbb{R}}^{\eps}$ is bounded. Therefore we only need to estimate
  $\m\G_{I_+,J_+\cap\mathbb{R}}^{\eps}$ and thus, we may assume
  $J_+\subset\mathbb{R}.$ In particular, without loss of generality  we assume
  that that there are reals $0<c<a<\infty$ and
  $1<d<\infty$ such that
  $$I_+=(1+ia,1+i\infty) \mbox{ and } J_+=[1,d).$$
  %Just like in case $(a4)$ above (with $b=\infty$), we have that
%  $\m\G_{I_+,[1,1+ic)}^{\eps}$ is bounded. Therefore we only need to estimate
%  $\m\G_{I_+,[1,d)}^{\eps}$ and we may assume $J_+=[1,d)\subset\mathbb{R}$.
%
%Thus, by our assumption on $I_+$ and $J_+$ there are real numbers
%$0<\alpha,\beta<\infty$ such that
%\begin{eqnarray*}
%  I'&:=&(1+i\alpha,1+i\infty) \subset I_+\cap I_0,\\
%  J'&:=&(1,\beta)\subset J_+ \cap J_0.
%\end{eqnarray*}
Therefore
\begin{align*}
 %\m\G^{\eps}_{I',J'} \leq
 \m\G^{\eps}_{I_+,J_+}
 &\leq \m\G^{\eps}_{I_+\cap I_0 ,J_+\cap J_0} +\m\G^{\eps}_{I_+\cap I_0 ,J_+\setminus J_0} \\
 & + \m\G^{\eps}_{I_+\setminus I_0,J_+\cap J_0}+\m\G^{\eps}_{I_+\setminus I_0,J_+\setminus J_0}.
\end{align*}

 - Now, if $J_+\subset J_0$ then $\m\G^{\eps}_{I_+\cap I_0 ,J_+\setminus
J_0}=0$. However, if $J_+\supset J_0$, then for every
$I'\subset[1,1+i\infty)$ we have
\begin{align}
\liminf_{\eps\to0}\D(T_{\eps}(I'), T_{\eps}(J_+\setminus J_0)) \geq \frac{1}{\diam(J_+\setminus J_0)}>0.
\end{align}
In particular $\m\G^{\eps}_{I_+\cap I_0 ,J_+\setminus J_0}$ and
$\m\G^{\eps}_{I_+\setminus I_0,J_+\cap J_0}$ are both bounded as
$\eps\to0$.\\

- Note, also that $\m\G^{\eps}_{I_+\setminus I_0,J_+\cap
J_0}\leq\m\G^{\eps}_{I_+\setminus I_0,[1,\infty)},$
which is bounded, since
$$\D(T_{\eps}(I_+\setminus
I_0),T_{\eps}([1,\infty)))=
\D(I_+\setminus I_0,[1,\infty))>0.$$

%\begin{itemize}
%\item $\m\G^{\eps}_{I',J_+\setminus
%    J'}\leq\m\G^{\eps}_{[1,1+i\infty),(\beta,d)},$ which is bounded by
%    (b1),
%\item $\m\G^{\eps}_{I_+\setminus I',J'}\leq
%    \m\G^{\eps}_{(1+ia,1+i\alpha],[1,\beta)},$ which is bounded by (a5),
%\item $\m\G^{\eps}_{I_+\setminus I',J_+ \setminus J'}\leq
%    \m\G^{\eps}_{(1+ia,1+i\alpha],(\beta,d)},$ which is bounded by (a3).
%\end{itemize}
Since, $\m\G^{\epsilon}_{I_+,J_+}\geq \m\G_{I_0,J_0}^{\epsilon}\to\infty$ it follows that
  \begin{equation}
    \lim_{\eps\to0} \frac{\m \G_{I_+,J_+}^{\eps}}{\m \G_{I_+\cap I_0,J_+\cap J_0}^{\eps}}=1.
  \end{equation}
Similarly, using the inequality
\begin{align*}
 %\m\G^{\eps}_{I',J'} \leq
 \m\G^{\eps}_{I_0,J_0}
 &\leq \m\G^{\eps}_{I_+\cap I_0 ,J_+\cap J_0} +\m\G^{\eps}_{I_+\cap I_0 ,J_0\setminus J_+} \\
 & + \m\G^{\eps}_{I_0\setminus I_+,J_+\cap J_0}+\m\G^{\eps}_{I_0\setminus I_+,J_0\setminus J_+},
\end{align*}
we obtain
\begin{equation}
    \lim_{\eps\to0} \left(\frac{\m \G_{I_0,J_0}^{\eps}}{\m \G_{I_+\cap I_0,J_+\cap J_0}^{\eps}}\right)^{-1}=1.
  \end{equation}
Finally, combining the last two equalities we obtain (\ref{limit:chimney}).
\end{proof}

%\newpage

%Therefore
%\begin{eqnarray*}
% %\m\G^{\eps}_{I',J'} \leq
% \m\G^{\eps}_{I_+,J_+}
% &\leq& \m\G^{\eps}_{I',J_+} + \m\G^{\eps}_{I_+\setminus I',J_+} \\
% &\leq& \m\G^{\eps}_{I',J'} + \m\G^{\eps}_{I',J_+\setminus J'} + \m\G^{\eps}_{I_+\setminus I',J'}+\m\G^{\eps}_{I_+\setminus I',J_+ \setminus J'}.
%\end{eqnarray*}
%Now,
%\begin{itemize}
%\item $\m\G^{\eps}_{I',J_+\setminus
%    J'}\leq\m\G^{\eps}_{[1,1+i\infty),(\beta,d)},$ which is bounded by
%    (b1),
%\item $\m\G^{\eps}_{I_+\setminus I',J'}\leq
%    \m\G^{\eps}_{(1+ia,1+i\alpha],[1,\beta)},$ which is bounded by (a5),
%\item $\m\G^{\eps}_{I_+\setminus I',J_+ \setminus J'}\leq
%    \m\G^{\eps}_{(1+ia,1+i\alpha],(\beta,d)},$ which is bounded by (a3).
%\end{itemize}
%Since, $\m\G_{I_+,J_+}\geq \m\G_{I',J'}\to\infty$ it follows that
%  \begin{equation}
%    \lim_{\eps\to0} \frac{\m \G_{I_+,J_+}^{\eps}}{\m \G_{I',J'}^{\eps}}=1.
%  \end{equation}
%Similarly
%\begin{equation}
%    \lim_{\eps\to0} \frac{\m \G_{I_0,J_0}^{\eps}}{\m \G_{I',J'}^{\eps}}=1
%  \end{equation}
%and combining the two equalities we obtain (\ref{limit:chimney}).

%
%\subsection{A quadratic differential with infinite vertical trajectories
%which do not accumulate on the boundary}
%
%Consider a domain $S=\{ z:|Re(z)|>1,\ Im(z)< \log\frac{1}{|Re(z)|-1}\}\cup \{
%z:|Re(z)|\leq 1\}$ and a holomorphic quadratic differential $\varphi
%(z)dz^2=dz^2$ on $S$.
%
%Find $$\epsilon T_{\epsilon}\to ?$$

\end{document}